\newcommand{\shrinkmargins}[1]{
  \addtolength{\textheight}{#1\topmargin}
  \addtolength{\textheight}{#1\topmargin}
  \addtolength{\textwidth}{#1\oddsidemargin}
  \addtolength{\textwidth}{#1\evensidemargin}
  \addtolength{\topmargin}{-#1\topmargin}
  \addtolength{\oddsidemargin}{-#1\oddsidemargin}
 \addtolength{\evensidemargin}{-#1\evensidemargin}
  }
\theoremstyle{plain}
\newtheorem{theorem}{Theorem}[section]
\newtheorem{corollary}[theorem]{Corollary}
\newtheorem{lemma}[theorem]{Lemma}
\newtheorem{proposition}[theorem]{Proposition}
\newtheorem{question}[theorem]{Question}
\newtheorem*{propo}{Proposition}
\newtheorem*{teo}{Theorem}
\newtheorem{definition}[theorem]{Definition}
\theoremstyle{remark}
\newtheorem{remark}[theorem]{Remark}
\theoremstyle{definition}
\def \Z { \mathbb{Z}}
\def \Q { \mathbb{Q}}
\def \R { \mathbb{R}}
\def \tr { \text{tr}}
\begin{document}

\thispagestyle{empty}
\setcounter{tocdepth}{7}

\title{The $(\alpha, \beta)-$ramification invariants of a number field.}
\author{Guillermo Mantilla-Soler}

\date{}

\maketitle

\begin{abstract}
Let $L$ be a number field. For a given prime $p$ we define integers $\alpha_{p}^{L}$ and $\beta_{p}^{L}$ with some interesting arithmetic properties. For instance,  $\beta_{p}^{L}$ is equal to $1$ whenever $p$ does not ramify in $L$ and  $\alpha_{p}^{L}$ is divisible by $p$ whenever $p$ is wildly ramified in $L$. The aforementioned  properties, although interesting, follow easily from definitions; however a  more interesting application of these invariants is the fact that they completely characterize the Dedekind zeta function of $L$. Moreover, if the residue class mod $p$ of $\alpha_{p}^{L}$ is not zero for all $p$ then such residues determine the genus of the integral trace.
\end{abstract}

 \section{ Introduction}

Let $L$ be a number field and let $p$ be a prime. In this paper we define three arithmetic invariants of $L$ attached to the prime $p$. The first two \[\alpha_{p}^{L} \ {\rm and} \ \beta_{p}^{L}\] are positive integers and the third $\mathfrak{a}_{p}^{L}$ is an integral quadratic form. The integers $\alpha_{p}^{L}$ and $\beta_{p}^{L}$, which we call {\it first and second ramification invariants},  capture the notion of ramification of the prime $p$ in $L$, furthermore they dictate if $p$ is wildly ramified or totally split in $L$:

\begin{propo}[Cf. Proposition \ref{ElLemaSplit}]

Let $L$ be a number field and let $p$ be rational prime. Then,

\begin{itemize}

\item[(a)] The prime $p$ does not ramifiy in $L$ if and only if $\beta_{p}^{L}=1$.

\item[(b)] The prime $p$ is totally split in $L$ if and only if  $\alpha_{p}^{L}=1.$

\item[(c)] The prime $p$ is wildy ramified in $L$ if and only if  $\alpha_{p}^{L}\equiv  0  \pmod{p.}$

\end{itemize}
\end{propo}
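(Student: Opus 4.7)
The plan is to reduce each equivalence to a statement about the local factorization of $p$ in $\mathcal{O}_L$. Writing $p\mathcal{O}_L = \prod_{i=1}^{r} \mathfrak{p}_i^{e_i}$ with residue degrees $f_i$, one has the standard equivalences: $p$ is unramified $\iff$ every $e_i=1$; $p$ is totally split $\iff$ every $e_i = f_i = 1$; and $p$ is wildly ramified $\iff$ $p \mid e_i$ for some $i$. Once $\alpha_{p}^{L}$ and $\beta_{p}^{L}$ are unpacked as explicit functions of the local data $(e_i, f_i, p)$ coming from the $\Z_p$-algebra $\mathcal{O}_L \otimes_\Z \Z_p \cong \prod_i \mathcal{O}_{L_{\mathfrak{p}_i}}$, each of (a), (b), (c) becomes a short combinatorial check.

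For (a), I would show that $\beta_{p}^{L}$ decomposes as a product (or analogous combination) of local contributions, each equal to $1$ precisely when the corresponding $e_i$ is $1$; then $\beta_p^L = 1$ forces every $e_i = 1$, and conversely. For (b), assuming $\alpha_{p}^{L}$ admits a factorization $\prod_i g(e_i, f_i)$ for some positive-integer-valued $g$ with $g(e,f) = 1$ iff $e = f = 1$, the statement $\alpha_{p}^{L} = 1$ immediately forces each local factor to be trivial, i.e. every local extension $L_{\mathfrak{p}_i}/\Q_p$ to be $\Q_p$ itself, which is total splitting.

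For (c), the strategy is to separate the tame and wild contributions to $\alpha_{p}^{L}$. If every $e_i$ is coprime to $p$, the local terms assembling $\alpha_{p}^{L}$ should reduce to products of $p$-adic units, so $\alpha_{p}^{L} \not\equiv 0 \pmod p$. Conversely, if $p \mid e_i$ for some $i$, the corresponding wild local factor picks up a factor of $p$, either directly from $e_i$ or, if $\alpha_p^L$ is built from valuations of the local discriminants, from the strict inequality $d_i > e_i - 1$ in the tame/wild dichotomy for the different.

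The main obstacle is part (c): pinning down the precise formula that distinguishes tame from wild behaviour modulo $p$. The tame direction uses only that $e_i$ is a $p$-adic unit when $\gcd(e_i, p) = 1$, but the wild direction requires an extra factor of $p$ to appear automatically in the contribution of each wildly ramified prime; this is exactly the content of the wild part of the conductor-discriminant formula. Once the correct formula for $\alpha_{p}^{L}$ is in place, the proof of (c) reduces to invoking the standard tame/wild dichotomy for the different, and (a) and (b) follow by inspection.
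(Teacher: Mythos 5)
Your overall strategy --- unpack $\alpha_{p}^{L}$ and $\beta_{p}^{L}$ as explicit functions of the splitting data $(e_{i},f_{i})$ and do a combinatorial check --- is exactly what the paper does; its entire proof is ``clear from the definitions.'' However, your write-up is only a conditional plan: it hypothesizes properties of the invariants (``assuming $\alpha_{p}^{L}$ admits a factorization $\prod_i g(e_i,f_i)$ with $g(e,f)=1$ iff $e=f=1$'') rather than verifying them, and in (c) it reaches for machinery (the different, the conductor--discriminant formula) that plays no role here. From Definition \ref{RamInvar}, $\alpha_{p}^{L}=\bigl(\prod_i e_i^{f_i}\bigr)u_p^{\sum_i(f_i-1)}$ and $\beta_{p}^{L}=\pm\bigl(\prod_i e_i^{f_i(e_i-1)}\bigr)u_p^{\sum_i(e_i-1)(f_i-1)}$, so the whole proof rests on two elementary observations that your proposal leaves implicit: first, for a finite prime $p$ one has $u_p\geq 2$, and every factor $e_i^{f_i}$, $e_i^{f_i(e_i-1)}$, $u_p^{k}$ (with $k\geq 0$) is a positive integer that equals $1$ exactly when the relevant exponent data is trivial; hence a product of such factors equals $1$ only if each factor does (for $\beta_{p}^{L}$ one first passes to absolute values to dispose of the sign $(-1)^{\sum_i\lfloor(e_i-1)/2\rfloor f_i}$). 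This gives the converse directions of (a) and (b), which are the only nontrivial implications. Second, for (c), since $0<u_p<p$ (and $u_2=5$ is odd), $u_p$ is a unit modulo $p$, so $p\mid\alpha_{p}^{L}$ if and only if $p\mid\prod_i e_i^{f_i}$, i.e.\ $p\mid e_i$ for some $i$, which is the definition of wild ramification; no appeal to the valuation of the different is needed, and indeed $\alpha_{p}^{L}$ is not built from discriminant valuations.

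So: right approach, same as the paper's, but to count as a proof you must actually substitute the definitions and record the positivity/unit facts about $u_p$ above; as written, the key converse implications in (a) and (b) and the ``tame $\Rightarrow$ nonzero mod $p$'' direction of (c) are asserted rather than proved.
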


Although the above characterization is neat the first and second ramification invariants capture a lot more about the arithmetic of the number field than only a recipe for ramification. Recall that two number fields are called {\it arithmetically equivalent} if and only their Dedekind zeta functions coincide. As it turns out the knowledge of these invariants determines the arithmetic equivalence of the field, more precisely:

\begin{teo}[Cf. Theorem \ref{MainAE}]
Let $K,L$ be number fields of the same degree over $\Q$. The following are equivalent: 

\begin{itemize}
\item For almost all $p$  \[\alpha_{p}^{K} =\alpha_{p}^{L.}\] 

\item K and L are arithmetically equivalent.

\end{itemize}
\end{teo}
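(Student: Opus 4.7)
The plan is to reduce the statement to a question about the splitting behavior of primes and then apply the classical characterization of arithmetic equivalence due to Gassmann and Perlis, which asserts that $K$ and $L$ are arithmetically equivalent if and only if, for almost all rational primes $p$, the multisets of residue degrees of $p$ in $K$ and in $L$ agree. With this reformulation in hand, the theorem becomes a statement purely about how $\alpha_p$ reads off local splitting data.

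The first reduction is to pass to unramified primes: since only finitely many primes ramify in either $K$ or $L$, one loses nothing in the ``for almost all $p$'' quantifier by restricting to primes $p$ that are unramified in both. At such primes Proposition~\ref{ElLemaSplit}(a) gives $\beta_p^K = \beta_p^L = 1$, so the second ramification invariant carries no information in this regime and it is only the behavior of $\alpha_p$ that matters.

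The central step is then to establish that, at a prime $p$ unramified in $L$, the integer $\alpha_p^L$ is a function of, and conversely determines, the multiset $(f_1,\ldots,f_g)$ of residue degrees of $p$ in $L$. Unpacking the definition of $\alpha_p^L$ should express it as a symmetric expression in the $p^{f_i}$ (or in the $f_i$ themselves), making it manifestly an invariant of the multiset; this gives the direction arithmetic equivalence $\Rightarrow$ $\alpha_p^K=\alpha_p^L$ at once, since arithmetically equivalent fields have matching splitting types at almost every prime. The converse direction requires the further claim that this map from splitting types to $\alpha_p^L$ is injective, so that equality of $\alpha_p^K$ and $\alpha_p^L$ forces the splitting types to coincide, at which point Perlis' theorem closes the argument.

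The main obstacle I anticipate is the injectivity claim in the key step. Depending on the precise shape of $\alpha_p^L$, injectivity could follow from a direct manipulation (for instance, via unique factorization applied to a product expression in the $p^{f_i}$), or it might require recognizing $\alpha_p^L$ as essentially equivalent to the local Euler factor of $\zeta_L$ at $p$. If $\alpha_p^L$ turns out to lose information about the multiset at an individual $p$, one would need to recover arithmetic equivalence only after combining the values at varying $p$, which is more delicate; but the abstract's emphasis on a prime-by-prime comparison suggests the individual injectivity does hold, and the proof should then be essentially a combinatorial verification once the definition is in hand.
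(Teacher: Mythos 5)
Your reduction to unramified primes matches the paper, but the central step of your plan fails at exactly the point you flagged as a worry. At a prime $p$ unramified in $L$ one has $e_i=1$ for all $i$ and $f_p^L=\sum_i f_i=n$, so the definition collapses to $\alpha_p^L=u_p^{\,n-g_p^L}$. Since $u_p\geq 2$, this determines (given the common degree $n$) only the \emph{number} $g_p^L$ of primes above $p$, not the multiset of residue degrees: for $n=4$ the splitting types $(1,3)$ and $(2,2)$ both give $\alpha_p=u_p^{2}$. So the map from splitting types to $\alpha_p$ is not injective, and you cannot feed the conclusion into the classical Gassmann--Perlis criterion, which needs the full multisets to agree.

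The paper closes this gap by invoking a genuinely stronger input: the Perlis--Stuart theorem (reproved in \cite{Manti2}, cited as \cite[Theorem 1.2]{Manti2}), which says that $K$ and $L$ are arithmetically equivalent if and only if $g_p^K=g_p^L$ for almost all $p$. With that theorem in hand the proof is two lines: at unramified $p$, equality of $\alpha_p$ is equivalent to equality of $g_p$, and one is done. Your fallback suggestion of ``combining the values at varying $p$'' points in roughly the right direction, but the actual resolution is not a combinatorial patch --- it is the citation of this nontrivial characterization of arithmetic equivalence by splitting numbers alone. Without it, your argument as written does not establish the direction from $\alpha_p^K=\alpha_p^L$ to arithmetic equivalence.
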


As we can see in Lemma \ref{AEyLos(a,b)} the {\it for almost all} in the statement above can not be replaced by {\it for all}. In other words the $\alpha_{p}^{L}$ invariants, runing over all primes, create a properly finer arithmetic invariant than the Dedekind zeta function $\zeta_{L}(s)$. Since the function $\zeta_{L}(s)$ refines several other arithmetic quantities e.g., the unit group, the discriminant, the Galois closure, the signature etc., see \cite[\S2]{Manti3}, it follows that the ordered set $\{\alpha_{p}^{L}\}$ is a new invariant with strong arithmetic implications.

\paragraph{Integral trace}

It is important to realize where do these invariants come from, and they are not just only an arbitrary combination of ramification and residue degrees. They appear naturally when trying to understand the local behavior of the integral trace; one sees them in the Jordan decomposition of the integral trace form over the $p$-adic integers. Moreover, in the absence of wild ramification the first ramification invariant determines the local structure of the trace: 

\begin{teo}[Cf. Theorem \ref{LaTrazaAca}] Let $K, L$ be two number fields with the same degree. Let $p$ be an odd prime and suppose that the discriminant of $K$ is equal to that of  $L$ up to squares in $(\Z_{p})^{*}$. Moreover, suppose that $p$ is not wildly ramified in either $K$ or $L$. Then, the integral trace forms of $K$ and $L$ are isometric over $\Z_{p}$ if and only if \[ \left( \frac{\alpha_{p}^{K}}{p} \right)= \left( \frac{\alpha_{p}^{L}}{p} \right).\]

\end{teo}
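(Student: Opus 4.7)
The plan is to reduce the claim to the classical classification of nondegenerate quadratic $\Z_{p}$-lattices for odd $p$: two such lattices are isometric if and only if their Jordan decompositions match block by block in rank and in discriminant class modulo $(\Z_{p}^{*})^{2}$. By the hypothesis of tameness and Proposition \ref{ElLemaSplit}(c), both $\alpha_{p}^{K}$ and $\alpha_{p}^{L}$ are units in $\Z_{p}^{*}$, so the Legendre symbols $\left(\frac{\alpha_{p}^{K}}{p}\right)$ and $\left(\frac{\alpha_{p}^{L}}{p}\right)$ genuinely detect the square classes of these elements. Since $\alpha_{p}^{L}$ and $\beta_{p}^{L}$ were introduced via the Jordan decomposition of $\mathfrak{a}_{p}^{L}$, I would first recall that identification in the tame case: the decomposition has at most two blocks (a unimodular block $U_{L}$ and a $p$-scaled unimodular block $p V_{L}$, with $U_{L}$ contributed by the maximal unramified subrings and $V_{L}$ by the tame ramification in each completion), and the invariants $\beta_{p}^{L}$ and $\alpha_{p}^{L}$ are, up to $(\Z_{p}^{*})^{2}$, the rank of $V_{L}$ and the discriminant of one of the two blocks.

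Next I would exploit the discriminant hypothesis. With $\mathfrak{a}_{p}^{L} \simeq U_{L} \oplus p V_{L}$, the total determinant modulo $(\Z_{p}^{*})^{2}$ equals $\det(U_{L}) \det(V_{L})\, p^{\operatorname{rk} V_{L}}$ and coincides with $\operatorname{disc}(L)$ up to a unit square. The assumption that $\operatorname{disc}(K)/\operatorname{disc}(L) \in (\Z_{p}^{*})^{2}$ forces simultaneously the $p$-adic valuations to match, so that $\operatorname{rk} V_{K} = \operatorname{rk} V_{L}$ (and hence $\operatorname{rk} U_{K} = \operatorname{rk} U_{L}$, since the degrees agree), and it forces the unit parts to be congruent, i.e.\ $\det(U_{K}) \det(V_{K}) \equiv \det(U_{L}) \det(V_{L}) \pmod{(\Z_{p}^{*})^{2}}$. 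In other words, once one of the two block discriminants matches modulo squares, the other matches automatically.

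Finally I would combine this with the Legendre-symbol hypothesis: the equality $\left(\frac{\alpha_{p}^{K}}{p}\right) = \left(\frac{\alpha_{p}^{L}}{p}\right)$ is exactly the statement that the block discriminant encoded by $\alpha_{p}^{L}$ agrees between $K$ and $L$ modulo $(\Z_{p}^{*})^{2}$. Combined with the previous paragraph, this gives agreement of both block discriminants, and the classification theorem yields $\mathfrak{a}_{p}^{K} \simeq \mathfrak{a}_{p}^{L}$ over $\Z_{p}$. The converse direction is immediate, since isometric $\Z_{p}$-lattices have matching Jordan invariants and hence matching Legendre symbols of $\alpha_{p}^{\bullet}$. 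The main technical obstacle is the first step, namely the clean identification of $\alpha_{p}^{L}$ with a Jordan block discriminant in the tame case; this should follow from a completion-by-completion computation of the trace pairing on $\mathcal{O}_{L_{\mathfrak{p}}}$ using a uniformizer adapted to the tame extension $L_{\mathfrak{p}}/\Q_{p}$, which would allow $\alpha_{p}^{L}$ to be expressed explicitly in terms of the residue degrees of the primes of $L$ above $p$.
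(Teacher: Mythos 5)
Your proposal is correct and takes essentially the same route as the paper: the paper's own proof simply invokes the Jordan decomposition of Theorem \ref{general} together with an external lemma (\cite[Lemma 2.1]{Manti1}), and your argument is exactly a self-contained expansion of that step via the odd-$p$ classification of $\Z_{p}$-lattices (ranks and unit discriminants of the Jordan blocks) plus the discriminant bookkeeping that ties the two block discriminants together. The only cosmetic slip is writing $\mathfrak{a}_{p}^{L} \simeq U_{L}\oplus pV_{L}$: in the paper's notation $\mathfrak{a}_{p}^{L}$ is just the unimodular block, and it is $q_{L}\otimes \Z_{p}$ that decomposes as $U_{L}\oplus pV_{L}$.
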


In the past we have used these invariants for the purpose of the classification of the isometry class of the integral trace, see for instance \cite{Manti, Manti1}. In this paper we show that they have also some other applications as for example the ones given in Theorem \ref{MainAE} and Proposition \ref{ElLemaSplit}. 

\subsection{Notation and definitions}\label{prelim}

We summarize here the most important notation used in the paper.

\begin{itemize}

\item Throughout  the paper whenever we say rational prime we refer to the usual integer primes together with the prime at infinity which, as Conway does \cite[Chapter 15, \S4]{conway}, we denote by $-1$. Explicitly, a rational prime $p$ will denote an element of the set $\{2,3,5,7,...\} \cup \{-1\}$.

\item For $a \in \Q_{p}$ we have that $v_{p}(a)$ is the usual $p$-adic valuation if $p\neq -1$, and for $p=-1$ we have that $v_{-1}(a)={\rm sign}(a)$

\item Given $a_{1},..., a_{n}$ elements of a ring $R$ --which in practice will be a maximal order on a number field or a local field-- the $R$-isometry class of a quadratic form $a_{1}x_{1}^{2}+...+a_{1}x_{n}^{2}$ will be denoted by $\langle a_1,...,a_n\rangle$. Whenever there is a possible ambiguity in the ring of definition of an isometry between two quadratic forms we will write $\cong_{R}$ to make it clear that the forms are considered to be over $R.$

\item The form $\underbrace{\langle e_{1},...,e_{1},e_{1}(-1)^{f_{1}-1}, e_{1}(-u_{p})^{f_{1}-1}\rangle}_{f_{1}}$ is to be understood to be $\langle -e_{1}, -e_{1}u_{p} \rangle$ in the case $f_{1}=2$.

\item The isometry class of the binary integral quadratic form $2xy$ over $\Z_{2}$,  the {\it hyperbolic plane},  will be denoted by $\mathbb{H}$.

\end{itemize}

\section{The invariants}

Given $L$ a number field and $p$ a rational prime we denote by  $g_{p}^{L}$ the number of primes in $O_{L}$ lying over $p$. Furthermore, let us denote by
\[e_{p}^{L}:= \sum_{i=1}^{g_{p}^{L}}e_{i}(L) \ \mbox{and} \ f_{p}^{L}:=\sum_{i=1}^{g_{p}^{L}}f_{i}(L),\] where $e_{1}(L),..., e_{g_{p}^{L}}(L)$ are the ramification degrees of the prime $p$ in $L$,  with respective residue degrees $f_{1}(L),..., f_{g_{p}^{L}}(L)$. When the field $L$ is clear from the context we will denoted the ramification (resp, residue) degrees only by $e_{i}$ (resp, $f_{i}$) instead of $e_{i}(L)$ (resp, $f_{i}(L)$).

\begin{definition}

For all prime $p$  we define the integer $u_{p}$ as follows 
\[u_{p}=\begin{cases}
-1 & \mbox{ if $p=-1$,}\\
 \ ~ 5 & \mbox{ if $p=2$,}\\
 \min\limits_{u \in \Z} \left \{ u : \mbox{ $0< u <p$  } | \left( \frac{u}{p} \right)=-1 \right \} & \ \mbox{for every other $p$.}
\end{cases}\]
\end{definition}


\begin{definition}\label{RamInvar} 
Let $L$ be a number field and let $p$ be a prime. The first and second ramification factors of $p$ in $L$ are the integers defined by:
\begin{align*}
\alpha_{p}^{L} &:= \left(\prod_{i=1}^{g_{p}^{L}}  e_{i}^{f_{i}}\right) u_{p}^{(f_{p}^{L}-g_{p}^{L})} \\ \beta_{p}^{L} & :=   \left(  \left(-1\right)^{\sum_{i=1}^{g_{p}^{L}}\left(\left\lfloor{\frac{(e_{i} -1)}{2}}\right\rfloor f_{i}\right) } \right) \left(\prod_{i=1}^{g_{p}^{L}}  e_{i}^{f_{i}(e_{i}-1)}\right)  u_{p}^{(n-f_{p}^{L}-e_{p}^{L}+g_{p}^{L})}. 
\end{align*}
\end{definition} 

\begin{remark}
Notice that  $\alpha_{-1}^{L}=\beta_{-1}^{L}=2^{s_{L}}$ where $s_{L}$ is the number of complex embeddings of $L$
\end{remark}


\begin{proposition}\label{ElLemaSplit}

Let $L$ be a number field and let $p$ be rational prime. Then,

\begin{itemize}

\item[(a)] The prime $p$ does not ramifiy in $L$ if and only if $\beta_{p}^{L}=1$.

\item[(b)] The prime $p$ is totally split in $L$ if and only if  $\alpha_{p}^{L}=1.$

\item[(c)] The prime $p$ is wildy ramified in $L$ if and only if  $\alpha_{p}^{L}\equiv  0  \pmod{p.}$

\end{itemize}
\end{proposition}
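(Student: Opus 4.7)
The plan is to rewrite each of $\alpha_p^L$ and $\beta_p^L$ in a form whose factors are manifestly nonnegative, and then read off the conditions under which each vanishes mod $p$ or equals $1$. The key algebraic identity behind this is
\begin{align*}
f_{p}^{L}-g_{p}^{L} &= \textstyle\sum_{i}(f_{i}-1), \\
n-f_{p}^{L}-e_{p}^{L}+g_{p}^{L} &= \textstyle\sum_{i}(e_{i}-1)(f_{i}-1),
\end{align*}
both of which are obtained from $n=\sum e_i f_i$ by elementary algebra. So $\alpha_p^L=\prod e_i^{f_i}\cdot u_p^{\sum(f_i-1)}$ and $|\beta_p^L|=\prod e_i^{f_i(e_i-1)}\cdot u_p^{\sum(e_i-1)(f_i-1)}$, both products of integers that are $\geq 1$.

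For the finite primes ($p\neq -1$) one has $u_p\geq 2$ and $\gcd(u_p,p)=1$ by definition. Part (a): in $|\beta_p^L|$ every factor is $\geq 1$, and $e_i^{f_i(e_i-1)}=1$ iff $e_i=1$, while $u_p^{(e_i-1)(f_i-1)}=1$ iff $e_i=1$ or $f_i=1$; the product equals $1$ precisely when all $e_i=1$, i.e.\ $p$ is unramified, and in that case the sign $(-1)^{\sum\lfloor(e_i-1)/2\rfloor f_i}$ is $+1$. Part (b): $\alpha_p^L=1$ forces $\prod e_i^{f_i}=1$ and $u_p^{\sum(f_i-1)}=1$, hence all $e_i=f_i=1$, i.e.\ $p$ is totally split. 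Part (c): since $u_p$ is a unit mod $p$, $\alpha_p^L\equiv 0\pmod p$ iff $\prod e_i^{f_i}\equiv 0\pmod p$ iff some $e_i$ is divisible by $p$, which is exactly wild ramification.

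The only wrinkle is the archimedean prime $p=-1$, where the remark gives $\alpha_{-1}^{L}=\beta_{-1}^{L}=2^{s_L}$. Here $\beta_{-1}^L=1$ iff $s_L=0$ iff $L$ is totally real, which is the ``unramified at infinity'' condition, and similarly for $\alpha_{-1}^L=1$ being the analogue of totally split. Part (c) at $p=-1$ is either vacuous or holds trivially under any natural convention, so no work is required there.

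There is no real obstacle; the argument is a direct unpacking of the definition once the two identities above are recognised. The only mildly delicate point is checking that, in the formula for $|\beta_p^L|$, a single $e_i\geq 2$ already forces $|\beta_p^L|\geq 2$ regardless of what the other local data do, so that no accidental cancellations between $\prod e_i^{f_i(e_i-1)}$ and $u_p^{\sum(e_i-1)(f_i-1)}$ can bring $|\beta_p^L|$ back to $1$; this is immediate since both factors are nonnegative powers of integers $\geq 2$.
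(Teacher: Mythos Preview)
Your proof is correct and is precisely the explicit unpacking of the definitions that the paper alludes to; the paper's own proof is the single line ``This is clear from the definitions.'' Your two identities $f_p^L-g_p^L=\sum_i(f_i-1)$ and $n-f_p^L-e_p^L+g_p^L=\sum_i(e_i-1)(f_i-1)$ are exactly what makes the claim transparent, and your treatment of $p=-1$ via the remark $\alpha_{-1}^L=\beta_{-1}^L=2^{s_L}$ is appropriate (part (c) at the archimedean place is indeed only meaningful by convention).
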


\begin{proof}
This is clear from the definitions.
\end{proof}

It follows from (b) above that the set of first ramification invariants determines $L$ whenever $L/\Q$ is Galois. More generally we have:

\begin{theorem}\label{MainAE}
Let $K,L$ be number fields of the same degree over $\Q$. The following are equivalent: 

\begin{itemize}
\item[(a)] For almost all rational prime $p$  \[\alpha_{p}^{K} =\alpha_{p}^{L.}\] 

\item[(b)] K and L are arithmetically equivalent.

\end{itemize}
\end{theorem}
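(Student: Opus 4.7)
The plan is to split the proof into the easy implication $(b)\Rightarrow(a)$ and the substantive implication $(a)\Rightarrow(b)$. For $(b)\Rightarrow(a)$: arithmetic equivalence identifies the local Euler factors $\prod_i(1-p^{-f_is})^{-1}$ of $\zeta_K$ and $\zeta_L$ at every rational prime, so the multisets of residue degrees above $p$ agree at every $p$; it also identifies the discriminants, so the same finite set of primes ramifies in $K$ as in $L$. At every unramified $p$ the data $\{(e_i,f_i)\}$ therefore matches completely, and Definition \ref{RamInvar} immediately yields $\alpha_p^K=\alpha_p^L$; since only finitely many primes are ramified in either field, this takes care of almost all $p$.

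The real content is the converse. My first step is to simplify the hypothesis by restricting to the cofinite set of primes $p\ne -1$ that are unramified in both $K$ and $L$. At such a $p$ every $e_i$ equals $1$, the product $\prod_i e_i^{f_i}$ collapses to $1$, and $f_p^L=n$, so the defining formula reduces to $\alpha_p^L=u_p^{n-g_p^L}$. Since $u_p$ is a positive integer $\ge 2$, the equation $\alpha_p^K=\alpha_p^L$ forces $g_p^K=g_p^L$. The problem thus reduces to a clean group-theoretic question: does $g_p^K=g_p^L$ for almost all $p$ imply $\zeta_K=\zeta_L$? I would attack this by passing to a Galois closure $M$ of the compositum $KL$, writing $G=\gal(M/\Q)$, $H=\gal(M/K)$, $H'=\gal(M/L)$. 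For an unramified $p$ with Frobenius $\sigma\in G$, the integer $g_p^K$ equals the number of $\langle\sigma\rangle$-orbits on $G/H$; Chebotarev's density theorem then upgrades the hypothesis to the statement that the $\langle\sigma\rangle$-orbit counts on $G/H$ and on $G/H'$ coincide for \emph{every} $\sigma\in G$.

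The main obstacle is to lift this equality of \emph{total} orbit counts to an equality of cycle types, which is the Gassmann condition known to be equivalent to $\zeta_K=\zeta_L$. My plan here is to apply the orbit-count equality not only to $\sigma$ but to every power $\sigma^k$, exploiting the fact that a $\sigma$-orbit of length $f$ splits into $\gcd(f,k)$ orbits of $\sigma^k$. Writing $o_f(\sigma,X)$ for the number of $\sigma$-orbits of length $f$ on $X$, this yields
\[ g_{\sigma^k}(X)=\sum_{f\,\mid\,|\sigma|}\gcd(f,k)\,o_f(\sigma,X), \]
a linear system in the unknowns $o_f(\sigma,G/H)$ whose matrix $(\gcd(f,k))_{f,k\,\mid\,|\sigma|}$ must be inverted. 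Using the identity $\gcd(f,k)=\sum_{d\,\mid\,\gcd(f,k)}\varphi(d)$, this matrix factors as $BDB^T$ with $B_{f,d}=[d\mid f]$ (inverted by M\"obius) and $D=\mathrm{diag}(\varphi(d))$, both invertible. Hence $o_f(\sigma,G/H)=o_f(\sigma,G/H')$ for all $\sigma$ and all $f$; via Chebotarev this translates back into matching multisets of residue degrees at almost every prime, which is equivalent to $\zeta_K=\zeta_L$.
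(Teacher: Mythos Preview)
Your proof is correct. The reduction at unramified primes to the identity $g_{p}^{K}=g_{p}^{L}$ is exactly what the paper does. At that point, however, the paper simply quotes an external result (\cite[Theorem~1.2]{Manti2}, itself a reworking of the Perlis--Stuart criterion \cite{PerStu}) to conclude arithmetic equivalence, whereas you supply a self-contained proof of that criterion: pass to a common Galois closure, use Chebotarev to turn the hypothesis into an orbit-count identity valid for every $\sigma\in G$, apply the identity to all powers $\sigma^{k}$, and invert the gcd matrix $(\gcd(f,k))_{f,k\mid|\sigma|}$ via the factorisation $B\,\mathrm{diag}(\varphi)\,B^{T}$ to recover the full cycle structure and hence Gassmann equivalence. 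So your route coincides with the paper's up to the point of citation and then unpacks the cited black box; it is longer but stands on its own, while the paper's version is a two-line appeal to outside work.
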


\begin{proof}
Since the equivalence is up to a set of primes of zero density  we may assume that we are dealing with unramified primes. Since for an unramified prime $p$ the number $f_{p}^{L}$ is equal to the degree of $L/\Q$ then, since $K$ and $L$ have the same degree, the equality $\alpha_{p}^{K} =\alpha_{p}^{L}$ for such $p$ is equivalent to $g_{p}^{K}=g_{p}^{L}.$ Hence the result follows from  \cite[Theorem 1.2]{Manti2}
\end{proof}

\begin{lemma}\label{AEyLos(a,b)}
Let $L$ be a number field. The ordered set $\mathcal{A}^{L}:=\{ \alpha_{p}^{L}\}$ consisting of the first ramification invariants of the field $L$ is a finer invariant than the Dedekind zeta function $\zeta_{L}(s)$. In other words there exists a pair of arithmetically equivalent number fields $K$ and $L$ such that $\mathcal{A}^{K} \neq \mathcal{A}^{L}$.

\end{lemma}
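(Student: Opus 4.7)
The plan is to exhibit an explicit pair of arithmetically equivalent number fields $K$ and $L$ whose sequences of first ramification invariants differ at some ramified prime, so that $\mathcal{A}^K\neq\mathcal{A}^L$ despite $\zeta_K=\zeta_L$.

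First I would observe that at any rational prime $p$, arithmetic equivalence forces the multiset of residue degrees $\{f_i\}$ to coincide for $K$ and $L$, since the local Euler factor $\prod_i(1-p^{-f_i s})^{-1}$ of $\zeta_L$ determines it. In particular the quantities $g_p^L$ and $f_p^L$, and hence the factor $u_p^{f_p^L-g_p^L}$ appearing in the definition of $\alpha_p^L$, agree on the two sides. At unramified primes every $e_i=1$, so $\alpha_p^K=\alpha_p^L$ automatically. Consequently any potential discrepancy between $\mathcal{A}^K$ and $\mathcal{A}^L$ can only arise at a shared ramified prime, and must be detected by the factor $\prod_i e_i^{f_i}$ in $\alpha_p$.

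Next, I would appeal to the existence (in the literature on Gassmann--Perlis equivalence) of arithmetically equivalent number fields whose local completions are not isomorphic at every prime. While the classical Perlis examples of degree $7$ happen to be locally isomorphic everywhere (so they would not suffice), later constructions have produced pairs $K,L$ with $\zeta_K=\zeta_L$ for which $K\otimes\Q_p\not\cong L\otimes\Q_p$ at some ramified prime $p$. For such a pair the multisets of ramification data $\{(e_i,f_i)\}$ at $p$ differ even though $\{f_i\}$ agrees, so the products $\prod_i e_i^{f_i}$ are unequal, forcing $\alpha_p^K\neq\alpha_p^L$.

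The main obstacle is producing the concrete pair: Gassmann equivalence, together with equality of discriminants $v_p(d_K)=v_p(d_L)$, imposes stringent constraints on the possible ramification multisets at a common ramified prime, so a naive swap of $e_i$ patterns is usually not available. The correct strategy is to either cite an explicit such pair from the literature and verify the inequality by direct computation of $\alpha_p$, or to construct one using the Gassmann subgroup machinery, choosing two non-conjugate Gassmann-equivalent point stabilizers whose restrictions to a suitable decomposition subgroup at $p$ are themselves non-equivalent as permutation representations. Either way, a single prime of disagreement is enough to conclude $\mathcal{A}^K\neq\mathcal{A}^L$.
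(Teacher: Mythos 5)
Your reduction of the problem is sound as far as it goes: arithmetic equivalence fixes the multiset $\{f_i\}$ at every prime (from the Euler factor), hence $g_p$, $f_p^L$ and the factor $u_p^{f_p^L-g_p^L}$, so any disagreement in $\alpha_p$ must come from $\prod_i e_i^{f_i}$ at a ramified prime. But from there the argument has two genuine gaps. First, the step ``there exist arithmetically equivalent $K,L$ with $K\otimes\Q_p\not\cong L\otimes\Q_p$ at some ramified $p$, hence the multisets $\{(e_i,f_i)\}$ differ there'' is false: local completions can be non-isomorphic while the ramification data $\{(e_i,f_i)\}$ coincide. The paper itself supplies a counterexample to your inference in Lemma \ref{SameAdelesNotAlphas}: $x^8-3$ and $x^8-48$ define arithmetically equivalent fields with non-isomorphic adele rings (so non-isomorphic completions at some ramified prime), yet both are totally ramified at $2$ and $3$ and $\alpha_p^K=\alpha_p^L$ for every $p$. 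What you actually need is a pair of arithmetically equivalent fields with \emph{different ramification indices} at a common prime --- and moreover with different values of $\prod_i e_i^{f_i}$, since distinct multisets $\{(e_i,f_i)\}$ can still give equal products. This is a much stronger existence statement; it amounts to a negative answer to a question of Perlis and Stuart, which for a long time was not known to exist.

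Second, and decisively, you never produce the pair. You acknowledge that ``the main obstacle is producing the concrete pair'' and defer it to an unspecified literature citation or to an unexecuted Gassmann-subgroup construction; but the entire content of this lemma \emph{is} that pair. The paper's proof consists precisely of exhibiting it: the degree-$7$ fields defined by $x^7-3x^6+4x^5-5x^4+3x^3-x^2-2x+1$ and $x^7-x^5-2x^4-2x^3+2x^2-x+4$ (from \cite[Theorem 3.7]{Manti3}, resolving the Perlis--Stuart question), for which a direct computation gives $\alpha_2^K=100$ and $\alpha_2^L=200$. Without naming such a pair and verifying the inequality of the $\alpha_2$'s, the proof is not complete.
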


\begin{proof}

Thanks to Theorem \ref{MainAE} we see that if $\mathcal{A}^{K} = \mathcal{A}^{L}$ then $\zeta_{K}(s)=\zeta_{L}(s)$. On the other hand consider the number fields $K$ and $L$ defined respectively by the polynomials $f:=x^7 - 3x^6 + 4x^5 - 5x^4 + 3x^3 - x^2 - 2x + 1$ and $g:=x^7 - x^5 - 2x^4 - 2x^3 + 2x^2 - x + 4$. These two arithmetically equivalent fields give a negative answer to a question asked in \cite{PerStu} regarding A.E fields with different ramification indices (See \cite[Theorem 3.7]{Manti3}). A calculation shows that $\alpha_{2}^{K}= 100$ and $\alpha_{2}^{L}=200$. In particular, $\mathcal{A}^{K} \neq \mathcal{A}^{L}$. 
\end{proof}

\begin{remark}

In the example given above the prime $p=2$ is the only prime for which $\alpha_{p}^{K}$ and $\alpha_{2}^{L}$ differ. This follows since $K$ and $L$ are arithmetically equivalent fields of discriminant $2^{6}691^{2}$ and $\alpha_{691}^{K}=8=\alpha_{691}^{L}.$

\end{remark}

\subsubsection{The quadratic form}

The final invariant we define is an integral quadratic form. As we will see below this form is $p$-adically, at least in the absence of wild ramification, determined by the first ramification invariant. 

\begin{definition} 

Let $L$ be a number field and let $p$ be a prime, including $p=-1$. The integral quadratic form $\mathfrak{a}_{p}^{L}$ as follows: \[\mathfrak{a}_{p}^{L} := \underbrace{\langle e_{1},...,e_{1},e_{1}(-1)^{f_{1}-1}, e_{1}(-u_{p})^{f_{1}-1}\rangle}_{f_{1}} \oplus...\oplus \underbrace{\langle e_{g_{p}^{L}},...,e_{g_{p}^{L}},e_{g_{p}^{L}}(-1)^{f_{g_{p}^{L}}-1}, e_{g_{p}^{L}}(-u_{p})^{f_{g_{p}^{L}}-1}\rangle}_{f_{g_{p}^{L}}}.\] 
\end{definition}

Notice that $\alpha_{p}^{L}$ is equal to the determinant of the form $\mathfrak{a}_{p}^{L}$, in particular $\alpha_{p}^{L}$ is determined by the isometry class of $\mathfrak{a}_{p}^{L}$.  Conversely, in the absence of wild ramification the form $\mathfrak{a}_{p}^{L}$ is completely determined by $\alpha_{p}^{L}$ over the $p$-adic integers. More explicitly:

\begin{lemma}
Let $L$ be a number field and let $p$ be an odd prime. Suppose that $p$ is not wildly ramified in $L$. Then as $\Z_{p}$ quadratic forms we have \[\mathfrak{a}_{p}^{L} \cong \underbrace{\langle1,....,1,\alpha_{p}^{L} \rangle}_{f_{p}^{L}}.\] 
\end{lemma}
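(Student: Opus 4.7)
The plan is to verify the hypotheses of the classification theorem for unimodular quadratic forms over $\Z_p$ (with $p$ odd) and then read off the isometry.

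First I would observe that, since $p$ is not wildly ramified in $L$, we have $p\nmid e_i$ for every $i$, so each $e_i$ is a unit in $\Z_p$. Because $-1$, $-u_p$, and $u_p$ are units as well, every diagonal entry appearing in $\mathfrak{a}_p^L$ lies in $\Z_p^\times$. Hence $\mathfrak{a}_p^L$ is a \emph{unimodular} diagonal quadratic form over $\Z_p$ of rank $\sum_{i=1}^{g_p^L} f_i = f_p^L$. In the same way, since $\alpha_p^L$ is a product of units in $\Z_p$, the form $\langle 1,\dots,1,\alpha_p^L\rangle$ of rank $f_p^L$ is also unimodular over $\Z_p$.

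Next I would compute the determinant of each block of $\mathfrak{a}_p^L$. For the $i$-th block, using $(-1)^{f_i-1}(-u_p)^{f_i-1}=u_p^{f_i-1}$, the product of the diagonal entries equals $e_i^{f_i}\,u_p^{f_i-1}$ (this formula is immediate for $f_i\geq 2$ and agrees with the single-entry convention for $f_i=1$). Multiplying over $i$ one obtains
\[\det\mathfrak{a}_p^L \;=\; \left(\prod_{i=1}^{g_p^L} e_i^{f_i}\right)\, u_p^{\sum_{i}(f_i-1)} \;=\; \left(\prod_{i=1}^{g_p^L} e_i^{f_i}\right)\, u_p^{f_p^L-g_p^L} \;=\; \alpha_p^L,\]
which coincides (on the nose, not merely modulo squares) with the determinant of $\langle 1,\dots,1,\alpha_p^L\rangle$.

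Finally I would invoke the standard classification over $\Z_p$ for $p$ odd: any two unimodular quadratic forms over $\Z_p$ of the same rank and with determinants equal in $\Z_p^\times/(\Z_p^\times)^2$ are isometric (this is the usual consequence of the fact that the diagonalization of a unimodular form over $\Z_p$ can always be reduced to $\langle 1,\dots,1,d\rangle$ with $d$ its determinant mod squares). Since both $\mathfrak{a}_p^L$ and $\langle 1,\dots,1,\alpha_p^L\rangle$ have rank $f_p^L$ and determinant $\alpha_p^L$, they are $\Z_p$-isometric, which is the claim.

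There is no real obstacle: the only subtle point is ensuring unimodularity, and that is precisely what the tame-ramification hypothesis $p\nmid e_i$ buys. Once every entry is a unit, the classification over $\Z_p$ does the rest, and the determinant computation is a direct unwinding of Definitions of $\mathfrak{a}_p^L$ and $\alpha_p^L$.
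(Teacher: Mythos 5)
Your proof is correct and follows essentially the same route as the paper: the paper's proof is a one-line citation of the classification of unimodular forms over $\Z_{p}$ for odd $p$ (Cassels, VIII, \S3, Lemma 3.4), which is exactly the fact you invoke at the end. You have merely made explicit the routine verifications (tameness gives $p\nmid e_{i}$ hence unimodularity, and the determinant of $\mathfrak{a}_{p}^{L}$ unwinds to $\alpha_{p}^{L}$) that the paper leaves to the reader.
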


\begin{proof}
This follows from \cite[VIII, \S3, Lemma 3.4]{cassels}.
\end{proof}

\subsection{Relation to other arithmetic invariants}

As we have seen the $(\alpha, \beta)$ invariants form a finer invariant than the Dedekind zeta function, and in the absence of wild ramification they also characterize the genus of the integral trace. It is natural to wonder whether there is a relation to other known arithmetic invariants of number fields. The first invariant that comes to mind is the ring of Adeles. Since the ivariants defined here are written in terms of the residue and ramification degrees the following is an inmediate consequence of \cite[Lemma 7]{iwa}. See also \cite[Lemma 1]{komatsu}.

\begin{theorem}\label{AdelesYlos(a,b)}
Let $ K,L$ be number fields. Suppose that as topological rings \[\mathbb{A}_{K} \cong \mathbb{A}_{L}.\] Then, for every prime $p$ we have that  $\alpha_{p}^{K}=\alpha_{p}^{L}$ and $\beta_{p}^{K}=\beta_{p}^{L}$.
\end{theorem}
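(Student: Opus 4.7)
The plan is to reduce the assertion to the equality, at each rational prime $p$, of the multiset of local completions of $K$ and $L$ at $p$, and then invoke the cited lemma of Iwasawa (together with Komatsu's Lemma) to extract this data from the adelic isomorphism.

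First I would observe that the integers $\alpha_p^L$ and $\beta_p^L$ are, by inspection of Definition \ref{RamInvar}, symmetric functions of the unordered collection of pairs $\{(e_i(L), f_i(L))\}_{i=1}^{g_p^L}$. Indeed $g_p^L$ is the cardinality of this multiset, $f_p^L = \sum f_i$, $e_p^L = \sum e_i$, and $n = \sum e_i f_i$ are all determined by it, while $u_p$ depends only on $p$. Consequently it suffices to show that whenever $\mathbb{A}_K \cong \mathbb{A}_L$ as topological rings, the multisets $\{(e_i(K), f_i(K))\}_i$ and $\{(e_i(L), f_i(L))\}_i$ coincide for every rational prime $p$, including $p=-1$.

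Next I would apply the cited results: by \cite[Lemma 7]{iwa} (see also \cite[Lemma 1]{komatsu}), a topological ring isomorphism $\mathbb{A}_K \cong \mathbb{A}_L$ induces, for each rational prime $p$, a topological ring isomorphism between the $p$-components $\prod_{\mathfrak{p} \mid p} K_\mathfrak{p} \cong \prod_{\mathfrak{q} \mid p} L_\mathfrak{q}$. Each of these finite products decomposes uniquely, up to permutation of the factors, as a product of local fields; and the ramification index and residue degree of each factor $K_\mathfrak{p}$ over $\mathbb{Q}_p$ are intrinsic topological invariants (the unique maximal compact subring recovers the ring of integers, its unique maximal ideal recovers the residue field, and so on). This yields the required equality of multisets of pairs, and substitution into the defining formulas gives $\alpha_p^K = \alpha_p^L$ and $\beta_p^K = \beta_p^L$. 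For the archimedean prime $p=-1$ the same argument applies with the trivial remark that $\mathbb{R}$ and $\mathbb{C}$ are distinguishable as topological rings, so $s_K = s_L$ and therefore $\alpha_{-1}^K = 2^{s_K} = 2^{s_L} = \alpha_{-1}^L$, and similarly for $\beta_{-1}$.

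The main obstacle is really no obstacle: once the cited lemmas are in hand the proof is a symmetric-function bookkeeping exercise. The only point that deserves a careful check is that $\alpha_p^L$ and $\beta_p^L$ genuinely depend only on the multiset of pairs $(e_i, f_i)$ (together with the intrinsic constant $u_p$), which is immediate from their definitions.
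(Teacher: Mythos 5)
Your proposal is correct and follows essentially the same route as the paper, which justifies the theorem in one line by noting that $\alpha_p^L$ and $\beta_p^L$ are functions of the residue and ramification degrees and then citing \cite[Lemma 7]{iwa} and \cite[Lemma 1]{komatsu}. You have simply spelled out the bookkeeping (that the invariants depend only on the multiset of pairs $(e_i,f_i)$, and that the adelic isomorphism determines this multiset at each prime, including $p=-1$) that the paper leaves implicit.
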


A natural question that comes to mind is:

\begin{question}
Does the converse to Theorem \ref{AdelesYlos(a,b)} hold?
\end{question}

It turns out that the answer is no. 

\begin{lemma}\label{SameAdelesNotAlphas}
There are number fields $K$ and $L$ such that $\mathcal{A}^{K}=\mathcal{A}^{L}$ and $\mathbb{A}_{K} \not\cong \mathbb{A}_{L}$.
\end{lemma}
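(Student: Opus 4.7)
The strategy is to convert the claim into a concrete search for a pair of fields with very specific local behavior. By the Iwasawa/Komatsu lemma invoked in Theorem \ref{AdelesYlos(a,b)}, the condition $\mathbb{A}_K\cong \mathbb{A}_L$ is equivalent to the existence, at every rational prime $p$, of a bijection between the primes of $K$ over $p$ and those of $L$ over $p$ under which the corresponding completions are isomorphic as $\Q_p$-algebras. On the other hand, $\alpha_p^K=\alpha_p^L$ only depends on the multiset of pairs $(e_i,f_i)$ occurring above $p$. Hence to prove the lemma it suffices to exhibit arithmetically equivalent $K$ and $L$ such that (i) at every prime $p$ the multisets $\{(e_i(K),f_i(K))\}$ and $\{(e_i(L),f_i(L))\}$ coincide, forcing $\mathcal{A}^K=\mathcal{A}^L$, and (ii) at some rational prime $p$ the multisets of completions fail to match as extensions of $\Q_p$.

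First I would note that condition (ii) forces the distinguishing prime to be wildly ramified, because for a tame prime the pair $(e,f)$ determines the completion up to $\Q_p$-isomorphism (the unramified part is canonical and there is essentially a unique tamely totally ramified extension of given degree inside a fixed algebraic closure, up to isomorphism). So the search must focus on pairs of arithmetically equivalent fields whose only disagreement at the local level lives in the wild part at some small prime, typically $p=2$.

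Next I would produce the pair by example. The candidate is a Gassmann-equivalent pair of degree $7$ or $8$ with wild ramification at $2$, chosen so that the splitting type at $2$ is the same multiset $\{(e_i,f_i)\}$ in both fields (this is a stronger requirement than arithmetic equivalence and rules out the example from Lemma \ref{AEyLos(a,b)}). Such pairs are known in the literature on arithmetically equivalent fields and can be located in databases like LMFDB. One then verifies directly, prime by prime, that the ramification multisets agree everywhere so that $\alpha_p^K=\alpha_p^L$ for all $p$. Finally, at $p=2$ one distinguishes the completions as $\Q_2$-algebras by a finer invariant: for instance, by comparing the Galois closures of the local factors, the discriminants of the individual completions, or their Serre mass/conductor exponents. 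Any such discrepancy is enough, via the Iwasawa/Komatsu criterion, to conclude $\mathbb{A}_K\not\cong \mathbb{A}_L$.

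The main obstacle is the existence of a suitable pair: one must simultaneously achieve (i) exact equality of $(e_i,f_i)$-multisets at every prime (so the $\alpha_p$'s align) and (ii) non-isomorphism of some wild completion. The first condition is quite restrictive among arithmetically equivalent fields, so the proof is really a database/example hunt rather than a structural argument; once the right pair is in hand, the verification in each paragraph above is essentially mechanical from the definition of $\alpha_p^L$ and the Iwasawa/Komatsu characterization of adelic isomorphism.
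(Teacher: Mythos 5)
Your overall strategy is the right one, but the proposal stops short of a proof at exactly the point where the content of the lemma lives. Lemma \ref{SameAdelesNotAlphas} is a pure existence statement, and your argument ends with ``such pairs are known in the literature \dots and can be located in databases like LMFDB'' without ever naming a pair or verifying that it has properties (i) and (ii). That is a search plan, not a proof. The paper closes this gap by writing down the classical Komatsu pair $K=\Q[x]/(x^8-3)$ and $L=\Q[x]/(x^8-48)$: by \cite[Theorem 1]{komatsu1} these are arithmetically equivalent with $\mathbb{A}_{K}\not\cong\mathbb{A}_{L}$, so $\alpha_{p}^{K}=\alpha_{p}^{L}$ at all unramified primes and at $p=-1$, and at the only ramified primes $p=2,3$ both fields are totally ramified, giving $\alpha_{p}^{K}=\alpha_{p}^{L}=8$ directly from the definition. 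To complete your write-up you would need to supply such an explicit pair together with the local verification.

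Two smaller points. First, your claim that the distinguishing prime is \emph{forced} to be wildly ramified, because at a tame prime the pair $(e,f)$ determines the completion up to $\Q_{p}$-isomorphism, is false: for odd $p$ the fields $\Q_{p}(\sqrt{p})$ and $\Q_{p}(\sqrt{u_{p}p})$ are non-isomorphic totally tamely ramified quadratic extensions with the same $(e,f)=(2,1)$. This does not derail the argument (it only narrows your search heuristic incorrectly), but as stated it is a wrong assertion; it happens that the known examples, including Komatsu's, are indeed distinguished at the wild prime $2$. Second, requiring equality of the full multisets $\{(e_i,f_i)\}$ at every prime is stronger than what $\mathcal{A}^{K}=\mathcal{A}^{L}$ demands, since $\alpha_{p}^{L}$ only depends on $\prod_i e_i^{f_i}$ and $f_{p}^{L}-g_{p}^{L}$; this over-restriction is harmless but makes the search appear harder than it is.
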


\begin{proof}
Consider the number fields $K$ and $L$ defined respectively by the polynomials $x^8-3$ and $x^8-48$. By \cite[Theorem 1]{komatsu1} we have that $\mathbb{A}_{K} \not\cong \mathbb{A}_{L}$ and $\zeta_{K}(s)=\zeta_{L}(s)$. Since $K$ and $L$ are arithmetically equivalent $\alpha_{p}^{K}=\alpha_{p}^{L}$ for every unramified prime $p$ and for $p=-1$. Since the only ramified primes are $p=2,3$ it suffices to show that $\alpha_{p}^{K}=\alpha_{p}^{L}$ for such primes. A calculation shows that both primes are totally ramified in each field hence $\alpha_{2}^{K}=\alpha_{2}^{L}= 8=\alpha_{3}^{K}=\alpha_{3}^{L}$.
\end{proof}

In other words Lemmas \ref{AEyLos(a,b)}, \ref{SameAdelesNotAlphas} and Theorem \ref{AdelesYlos(a,b)} tell us that there are implications between the strength of the invariants \[\mathbb{A}_{K} \longrightarrow \mathcal{A}^{K} \longrightarrow \zeta_{K}(s)\] and that in general such implications can not be reversed. 

\subsection{Integral trace form and its local representation}

Let $L$ be a number field and let $O_{L}$ be its maximal order. The integral trace form is the integral quadratic form given by \begin{displaymath}
\begin{array}{cccc}
q_L : & O_L  &  \rightarrow & \Z  \\  & x & \mapsto  & \ \tr_{L/\Q}(x^2).
\end{array}
\end{displaymath}
A local description of the form $q_{L}$ was first obtained in the case of  tame abelian and of odd degree  $L$ by Maurer \cite{Mau}. A broad generalization to Maurer's work for tame number fields was obtained by Erez, Morales and Perlis. Suppose $p$ is a rational prime which is at worst tamely ramified in $L$. In \cite{emp} the authors find a Jordan decomposition of the integral trace form $q_L$ when viewed as a form over $\Z_{p}$. In terms of the invariants defined in this paper their result can be stated as follows:

\begin{theorem}\label{general}
Let $L$ be a degree $n$ number field.  Let $p$ be a rational prime which is not wildly ramified in $L$. If we denote by $q_{L} \otimes \Z_{p}$ the quadratic form over $\Z_{p}$ induced by  $q_{L}$, then 
\[q_{L} \otimes \Z_{p} \cong 
\begin{cases}
\mathfrak{a}_{p}^{L}  \bigoplus p \otimes \underbrace{( \mathbb{H} \oplus...\oplus \mathbb{H})}_{\frac{n-f_{p}^{L}}{2}} & \mbox{ if $p=2$,}\\
\mathfrak{a}_{p}^{L}  \bigoplus p \otimes \underbrace{\langle 1,...,1,\beta_{p}^{L}\nu_{p}^{L} \rangle}_{n-f_{p}^{L}}  &\mbox{ if $p \neq 2$}. \\
\end{cases}\]
(See Definition \ref{thirdram} for the value of the $p$-adic integer $\nu_{p}^{L}$.)
\end{theorem}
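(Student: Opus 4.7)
The plan is to derive this from the Erez--Morales--Perlis Jordan decomposition by a careful bookkeeping in the notation introduced here. First, I would use the canonical isomorphism $O_L \otimes \Z_p \cong \prod_{i=1}^{g_p^L} O_{L_{\mathfrak{p}_i}}$, where $L_{\mathfrak{p}_i}$ denotes the completion of $L$ at the prime $\mathfrak{p}_i$ lying above $p$, to get an orthogonal decomposition
\[ q_L \otimes \Z_p \;\cong\; \bigoplus_{i=1}^{g_p^L} q_{L_{\mathfrak{p}_i}}. \]
This reduces the problem to the local one: describe the integral trace of each tame extension $L_{\mathfrak{p}_i}/\Q_p$ of ramification $e_i$ and residue degree $f_i$, and then reassemble.

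Second, I would apply the Erez--Morales--Perlis Jordan decomposition to each local factor. Their result produces, for each $i$, a unit-valued block of rank $f_i$ coming from the maximal unramified subextension, and a $p$-scaled block of rank $f_i(e_i-1)$ coming from the ramified part (concretely, in the basis $\pi_i^j \omega$ with $\omega$ lifting the residue field and $\pi_i$ a uniformizer satisfying $\pi_i^{e_i} \in p\,\Z_p^\times$, the trace matrix is antidiagonal on the ramified variables). The unit-valued blocks assemble, by definition, into exactly the summands appearing in $\mathfrak{a}_p^L$, so the sum over $i$ produces $\mathfrak{a}_p^L$ as the unit part of $q_L \otimes \Z_p$.

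Third, I would analyze the $p$-scaled part, whose total rank is $\sum_i f_i(e_i-1) = n - f_p^L$. For $p=2$ tameness forces every $e_i$ to be odd, so each antidiagonal block has even size $f_i(e_i-1)$ and splits into $f_i(e_i-1)/2$ hyperbolic planes over $\Z_2$; assembling gives $\frac{n-f_p^L}{2}$ copies of $2\otimes\mathbb{H}$. For $p$ odd, after stripping the factor of $p$, the remaining form is unimodular over $\Z_p$ and is therefore determined by its rank and its discriminant modulo $(\Z_p^\times)^2$. The rank is $n - f_p^L$; the discriminant is a product, over $i$, of $(e_i)^{f_i(e_i-1)}$ (from the scaled antidiagonal entries) times a sign $(-1)^{\lfloor (e_i-1)/2 \rfloor f_i}$ (from pairing antidiagonal entries into $\langle c,-c\rangle$ blocks), times a contribution of $u_p$'s tracked by the auxiliary invariant $\nu_p^L$. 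Matching term by term against the definition of $\beta_p^L$ shows this discriminant is $\beta_p^L \nu_p^L$ up to squares, and the claimed form $\langle 1,\ldots,1,\beta_p^L\nu_p^L\rangle$ follows from the standard classification.

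The main obstacle is the odd-prime discriminant calculation: one must reconcile the signs produced by hyperbolicizing the antidiagonal blocks with the exponent $\sum_i \lfloor(e_i-1)/2\rfloor f_i$ defining the sign in $\beta_p^L$, and to absorb the remaining unit factors into the single invariant $\nu_p^L$ of Definition \ref{thirdram}. Everything else in the argument is a mechanical translation of the Erez--Morales--Perlis theorem into the present notation.
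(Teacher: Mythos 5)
Your proposal is correct and follows essentially the same route as the paper: decompose $q_L\otimes\Z_p$ orthogonally over the completions at the primes above $p$, pass through the maximal unramified subextension of each local factor to get a unit block of rank $f_i$ plus a $p$-scaled antidiagonal block of rank $f_i(e_i-1)$, hyperbolicize the latter when $p=2$, and for odd $p$ classify the unimodular $p$-part by rank and discriminant while tracking the uniformizer units into $\nu_p^L$. This is exactly the Erez--Morales--Perlis strategy that the paper itself re-executes (Theorem \ref{localgeneral} and the lemmas feeding into it), so no further comparison is needed.
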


As we mentioned above Theorem \ref{general} is not  original to us, and even though most of the tools we use in our proof differ from the ones in \cite{emp}, we follow their main strategy to prove Theorem \ref{general}. We have made all of the calculations very explicit to keep track of how the ramifications we defined came into existence. A particular instance where our approach is different to the one in \cite{emp} is the case $p=2$. Furthermore, the element $\nu_{p}^{L}$ is also new in our presentation since in the original result a small error lead to such term to disappear.

\section{The origings of the $(\alpha, \beta)$ invariants.}

To see where these invariants come from we begin by reviewing  the local integral trace.\\

\noindent The core idea here is to reduce the problem of localizing the integral trace at $p$ to calculating the integral trace of a finite extension of $\Q_{p}$. Suppose that $L$ is a number field and let $p$ be a rational prime. Then, the localization of the integral trace has an orthogonal decomposition as the sum of integral traces of finite extensions of $\Q_{p}$. More explicitly, if \[L \otimes_{\Q} \Q_{p} \cong K_{1} \times...\times K_{g}\] then  \[q_{L} \otimes \Z_{p} \cong \mathrm{tr}_{K_{1}/\Q_{p}}(x^{2})\mid_{O_{K_{1}}} \oplus ... \oplus \mathrm{tr}_{K_{g}/\Q_{p}}(x^{2})\mid_{O_{K_{g}}}.\] The above decomposition is easily obtained for the rational trace form, see \cite[Chapter II, (8.4)]{ne}, and from this decomposition one gets the above by comparing the discriminants of $O_{L}$ and $O_{K_{1}} \oplus ... \oplus O_{K_{g}}$;  see \cite[\S2]{Mau} for details. It follows that in order to determine  $q_{L} \otimes \Z_{p}$ it is enough to find, for any local field $K/\Q_{p}$, a Jordan decomposition of the $\Z_{p}$-integral trace form $\mathrm{tr}_{K/\Q_{p}}(x^{2})\mid_{O_{K}}$. Next we show how to do this for $K/\Q_{p}$ at worst tamely ramified.

\subsection{Local integral trace form}\label{localtrace}

Let $K/\Q_{p}$ be a finite extension with at worst tame ramification. To determine the $\Z_{p}$-integral trace form $\mathrm{tr}_{K/\Q_{p}}(x^{2})\mid_{O_{K}}$, we first find expressions for two integral traces depending on a convenient intermediate extension $F$.

\[\xymatrix{  
     & K \ar@{-}[d]^{\mathrm{tr}_{K/F}(x^{2})\mid_{O_{K}}}   & \\
          &  F \ar@{-}[d]^{\mathrm{tr}_{F/\Q_{p}}(x^{2})\mid_{O_{F}}} & \\
            & \Q_{p} &  \\
            }\]

By choosing $F$ carefully one can deduce the shape of the forms $\mathrm{tr}_{K/F}(x^{2})\mid_{O_{K}}$ and $\mathrm{tr}_{F/\Q_{p}}(x^{2})\mid_{O_{F}}$, and then recover $\mathrm{tr}_{K/\Q_{p}}(x^{2}) \mid_{O_{K}}$ by pasting together the above forms. The most natural choice for $F$ is $\Q_{p}^{un} \cap K$, the maximal unramified sub-extension of $K/\Q_{p}$. Since $K/\Q_{p}$ is tame, the extension $K/F$ is a totally ramified tame extension and such extensions have a very simple description. On the other hand, $F/{\Q_{p}}$ is the unique unramified extension of $\Q_{p}$ of degree $[F:\Q_{p}]$, moreover it is cyclic, hence it is also a very ``nice" extension.

\subsubsection{An intermediate extension}

Here we start by studying the trace form on the top extension, i.e., $\mathrm{tr}_{K/F}(x^{2})\mid_{O_{K}}$.

\begin{lemma} Let $K/F$ be a totally ramified tame extension of local fields over $\Q_{p}$. Let $e=[K:F]$. Then, there exist $\pi$ and $\pi_{F}$, uniformizers of $O_K$ and $O_F$, respectively, such that $O_{K}=O_{F}[\pi]$ and $\pi^{e}=\pi_{F}$.
\end{lemma}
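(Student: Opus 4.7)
The plan is to prove this classical structure theorem in two logically independent steps: first establishing $O_K = O_F[\pi]$ for any uniformizer $\pi$ of $K$ (a fact that only requires total ramification, not tameness), and then using the tameness hypothesis together with Hensel's lemma to adjust $\pi$ and $\pi_F$ so that the clean relation $\pi^e = \pi_F$ holds.

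For the first step, I would fix any uniformizer $\pi$ of $O_K$ and use that total ramification forces the residue field of $K$ to coincide with that of $F$. The elements $1, \pi, \ldots, \pi^{e-1}$ have pairwise distinct $v_K$-values $0, 1, \ldots, e-1$, which cover every residue modulo $e = v_K(\pi_F)$. Given $x \in O_K$, I would write its leading term as $c_0 \pi^{v_K(x)}$ with $c_0 \in O_F$ a lift of the appropriate residue (possible because the residue fields agree), subtract, and iterate; completeness of $O_K$ makes the resulting expansion converge to an expression of $x$ as an $O_F$-linear combination of $1, \pi, \ldots, \pi^{e-1}$, so $O_K = O_F[\pi]$.

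For the second step, start with an arbitrary uniformizer $\pi_0$ of $O_K$ and any uniformizer $\pi_F^{(0)}$ of $O_F$. Both $\pi_0^e$ and $\pi_F^{(0)}$ have $v_K$-value $e$, so $\pi_0^e = u\,\pi_F^{(0)}$ for some unit $u$ of $O_K$. Using the residue field equality, lift the reduction of $u$ to a unit $\tilde u \in O_F^{\times}$ and set $\eta := u/\tilde u$, which is congruent to $1$ modulo the maximal ideal of $O_K$. Here tameness enters decisively: since $p \nmid e$, the derivative of $T^e - \eta$ at $T = 1$ equals $e$, a unit in the residue field, so Hensel's lemma produces $w \in O_K^{\times}$ with $w^e = \eta$. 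Taking $\pi := \pi_0/w$ (still a uniformizer of $O_K$) and $\pi_F := \tilde u\,\pi_F^{(0)}$ (still a uniformizer of $O_F$) yields $\pi^e = \pi_0^e/w^e = \tilde u\,\pi_F^{(0)} = \pi_F$, and the first step completes the proof. The main obstacle is this final Hensel step: without $p \nmid e$ the derivative would vanish in the residue field and the argument would collapse, which is precisely why the lemma is stated only for tame extensions.
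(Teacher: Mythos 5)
Your proof is correct, and it is essentially the standard argument the paper itself does not write out: the paper's ``proof'' is a one-line citation to Hasse describing the result as a standard application of Eisenstein's criterion and Hensel's lemma. Your Hensel step (solving $T^{e}=\eta$ for $\eta\equiv 1$, using $p\nmid e$ so that the derivative $e$ is a residue unit) is exactly the intended use of Hensel's lemma, and where the cited argument would invoke Eisenstein's criterion to conclude $O_{K}=O_{F}[\pi]$ from the fact that $\pi$ has Eisenstein minimal polynomial $x^{e}-\pi_{F}$, you instead prove $O_{K}=O_{F}[\pi]$ directly by successive approximation using equality of residue fields; the two are interchangeable, and your version has the small merit of isolating which hypothesis (total ramification vs.\ tameness) each half of the conclusion actually uses.
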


\begin{proof}
This is a standard application of Eisenstein's criterion and Hensel's lemma. For details see \cite[Chapter 16]{Hasse}.
\end{proof}

\begin{lemma}\label{fact1} Let $F/\Q_{p}$ and unramified extension and let $K/F$ be a totally tamely ramified extension of degree $e$. Then, there exists an integral basis $\mathfrak{B}$ for $O_{K}$ over $O_{F}$, and a unit $\mu_{F} \in O_{F}^{*}$, such that the Gram matrix of $\mathrm{tr}_{K/F}(x^{2})\mid_{O_{K}} $ with respect the basis $\mathfrak{B}$ is  
\[M_{\mathfrak{B}}=\left[\begin{array}{ccccc}e & 0 & ... & ... & 0 \\0 & 0 & ... & 0 & e \cdot p \cdot \mu_{F} \\ \vdots &  &  & \iddots & 0 \\ \vdots & 0 & e\cdot p \cdot \mu_{F} & 0 & \vdots \\0 & e\cdot p \cdot\mu_{F} & 0 & ... & 0\end{array}\right]\]
\end{lemma}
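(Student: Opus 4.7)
The plan is to use the explicit uniformizer $\pi$ from the previous lemma and take as the basis $\mathfrak{B}$ the power basis $\{1,\pi,\pi^{2},\ldots,\pi^{e-1}\}$ of $O_{K}$ over $O_{F}$. The Gram matrix in this basis has entries
\[
M_{\mathfrak{B}}(i,j) \;=\; \mathrm{tr}_{K/F}(\pi^{i+j}), \qquad 0\le i,j\le e-1,
\]
so the whole lemma reduces to computing $\mathrm{tr}_{K/F}(\pi^{k})$ for $0\le k\le 2e-2$.

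To do this I would compute traces in an algebraic closure of $F$. Since $\pi^{e}=\pi_{F}$, the minimal polynomial of $\pi$ over $F$ is the Eisenstein polynomial $x^{e}-\pi_{F}$, whose roots are the elements $\zeta\pi$ with $\zeta$ running over the $e$-th roots of unity in $\overline{F}$ (such $\zeta$ exist because $p\nmid e$ by tameness, although for the argument one only needs their existence in $\overline{F}$, which is automatic). Consequently
\[
\mathrm{tr}_{K/F}(\pi^{k}) \;=\; \pi^{k}\sum_{\zeta^{e}=1}\zeta^{k} \;=\; \begin{cases} e\,\pi_{F}^{k/e} & \text{if } e\mid k,\\ 0 & \text{otherwise.}\end{cases}
\]
The next step is to read this off on the Gram matrix. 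For $0\le i,j\le e-1$ the exponent $i+j$ lies in $\{0,1,\ldots,2e-2\}$, and is divisible by $e$ only when $i+j=0$ (giving the top-left entry $e$) or when $i+j=e$ (giving the anti-diagonal entries $e\,\pi_{F}$ at positions $(1,e-1),(2,e-2),\ldots,(e-1,1)$). All other entries vanish, so $M_{\mathfrak{B}}$ has exactly the claimed anti-diagonal-plus-corner shape with the anti-diagonal entries equal to $e\,\pi_{F}$.

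The final step is to account for the factor of $p$: since $F/\Q_{p}$ is unramified, $p$ is itself a uniformizer of $O_{F}$, so there exists a unit $\mu_{F}\in O_{F}^{*}$ with $\pi_{F}=p\,\mu_{F}$. Substituting this into the anti-diagonal entries produces $e\cdot p\cdot \mu_{F}$, matching the statement exactly. I expect no real obstacle here: the only delicate point is the Galois/trace computation for $\mathrm{tr}_{K/F}(\pi^{k})$, and this is handled cleanly by the Eisenstein polynomial $x^{e}-\pi_{F}$ together with the vanishing of $\sum_{\zeta^{e}=1}\zeta^{k}$ for $e\nmid k$.
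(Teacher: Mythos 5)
Your proposal is correct and follows essentially the same route as the paper: take the power basis $\{1,\pi,\ldots,\pi^{e-1}\}$ with $\pi^{e}=\pi_{F}$, compute $\mathrm{tr}_{K/F}(\pi^{k})$ via the conjugates $\zeta\pi$ and the vanishing of the root-of-unity sum for $e\nmid k$, observe that $i+j\in\{0,\ldots,2e-2\}$ is a multiple of $e$ only for $i+j=0$ or $i+j=e$, and write $\pi_{F}=p\,\mu_{F}$ using that $F/\Q_{p}$ is unramified. No gaps.
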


\begin{proof}

Since $K/F$ is totally ramified and tame there exists $\pi$, a uniformizer for $O_{K}$, with minimal polynomial of the form $x^e-\pi_{F}$, where $\pi_{F}$ is a uniformizer of $O_{F}$, and such that $O_{K}=O_{F}[\pi]$. Since $F/\Q_{p}$ is unramified there exists $\mu_{F} \in O_{F}^{*}$ such that $\pi_{F}=\mu_{F} \cdot p$. Let $\zeta \in \mu_{e}(\overline{F})$ be a primitive root of $1$, and let $m$ be a non-negative integer. Then,
 \[ \mathrm{tr}_{K/F}(\pi^{m})= \pi^{m} (1+\zeta^{m}+(\zeta^{m})^{2}+...+(\zeta^{m})^{e-1})=
\begin{cases}
0,                  & \mbox{if $e\nmid m$,}\\
e\pi^{m},       & \mbox{if $e\mid m$.} \\
\end{cases}   \]
On the other hand if $0\leqslant i, j\leqslant e-1$ the only times that $i+j$ is a multiple of $e$ is whenever $i+j=0$ (i.e., $i=j=0$) or when $i+j=e$. Since $\pi^{e}=p\mu_{F}$ we have that 
\[\mathrm{tr}_{K/F}(\pi^{i} \cdot \pi^{j})=
\begin{cases}
e\cdot p \cdot \mu_{F}   & \mbox{$i+j=p$,}\\
e     & \mbox{$i=j=0$,} \\
0     & \mbox{otherwise.} \\
\end{cases}\] for all $0\leqslant i, j\leqslant e-1$. In other words $T_{\mathfrak{B}}$ is the Gram matrix of the trace in the basis $\mathfrak{B}=\{1,\pi,...,\pi^{e-1}\}$.

\end{proof}

\begin{theorem}\label{KoverF}
Let $F, K$ be as in Lemma \ref{fact1}.

\begin{itemize}
 \item[(a)] Suppose that  $p\neq 2$. Then, there exists a unit $\mu_{F} \in O^{*}_{F}$ such that the $O_{F}$ integral trace form $\mathrm{tr}_{K/F}(x^{2})\mid_{O_{K}} $ has the following diagonalization over $O_{F}$: \[\mathrm{tr}_{K/F}(x^{2})\mid_{O_{K}} \cong \left \langle e,p,p,...,p,p\mu_{F}^{e-1}e^{e-1}(-1)^{\left \lfloor\frac{e-1}{2}\right \rfloor} \right \rangle.\]
 
\item[(b)]  If $p=2$ \[ \mathrm{tr}_{K/F}(x^{2})\mid_{O_{K}} \cong \left \langle e \right \rangle \bigoplus  \left \langle 2 \right \rangle \otimes\underbrace{(\mathbb{H} \oplus...\oplus \mathbb{H})}_{\frac{e-1}{2}}.\] 
 
\end{itemize}

\end{theorem}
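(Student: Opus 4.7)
My plan is to begin from the explicit Gram matrix provided by Lemma \ref{fact1}, then pair the basis vectors $\pi^{i}$ with $\pi^{e-i}$ to obtain an orthogonal decomposition, and finally handle the two cases separately. In the basis $\mathfrak{B}=\{1,\pi,\ldots,\pi^{e-1}\}$, the Gram matrix of $\tr_{K/F}(x^{2})\mid_{O_{K}}$ splits as $\langle e\rangle$ (coming from $\pi^{0}$) plus an $(e-1)\times(e-1)$ anti-diagonal block $M'$ whose only nonzero entries equal $c:=ep\mu_{F}$. Because $M'$ vanishes off its anti-diagonal, the submodules spanned by the pairs $\{\pi^{i},\pi^{e-i}\}$ for $1\le i<e/2$, together with the singleton $\{\pi^{e/2}\}$ when $e$ is even, are mutually orthogonal; hence $M'$ decomposes further as an orthogonal sum of $2\times 2$ anti-diagonal blocks $\left(\begin{smallmatrix}0 & c\\ c & 0\end{smallmatrix}\right)$ and, in the even case, one $1\times 1$ block $\langle c\rangle$.

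For part (b), tame ramification at $p=2$ forces $e$ to be odd, so the middle singleton is absent. Each $2\times 2$ block equals $c\cdot\left(\begin{smallmatrix}0 & 1\\ 1 & 0\end{smallmatrix}\right)$, and since $c/2=e\mu_{F}\in O_{F}^{*}$ (as $e$ is odd and $\mu_{F}$ is a unit), I would rescale the first basis vector of the pair by $(e\mu_{F})^{-1}$ to transform this into the matrix of $2\mathbb{H}$ over $O_{F}$. Summing the $(e-1)/2$ paired blocks and adjoining $\langle e\rangle$ yields the claimed expression.

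For part (a), with $p$ odd, I would further diagonalize: the $O_{F}$-change of basis $x=u+v$, $y=u-v$ has unit determinant $-2$ and turns each $2\times 2$ block into $\langle 2c,-2c\rangle$. Combined with $\langle e\rangle$ and, when $e$ is even, the middle $\langle c\rangle$, this produces an orthogonal decomposition $\langle e\rangle\oplus p\,U$ where $U$ is an $O_{F}$-unimodular form of rank $e-1$. Collecting the contributions $-(2e\mu_{F})^{2}$ from each paired block, the factor $e\mu_{F}$ from the middle in the even case, and absorbing factors of $4$ and even powers of units as squares, one finds
\[\det U\equiv e^{e-1}\mu_{F}^{e-1}(-1)^{\lfloor(e-1)/2\rfloor}\pmod{O_{F}^{*2}}.\]
This matches the rank and determinant (modulo squares) of the target $\langle 1,\ldots,1,e^{e-1}\mu_{F}^{e-1}(-1)^{\lfloor(e-1)/2\rfloor}\rangle$, so by the classification of $O_{F}$-unimodular lattices for $p$ odd (Cassels \cite[VIII, \S3, Lemma 3.4]{cassels}), $U$ is isometric to it. Scaling by $p$ yields the stated diagonalization.

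The main obstacle I anticipate is the sign and square bookkeeping: one must check that the parity of $(e-1)(e-2)/2$, which controls the sign of the determinant of the raw anti-diagonal block, agrees with that of $\lfloor(e-1)/2\rfloor$, and treat both parities of $e$ uniformly so that in the even-$e$ case the middle contribution $e\mu_{F}$ exactly compensates the failure of $e^{e-1}\mu_{F}^{e-1}$ to be a square when $e-1$ is odd. The $p=2$ case also deserves care since diagonalization over $O_{F}$ is unavailable; the saving feature is that each paired block is already naturally presented as a scaled hyperbolic plane, so no change of basis with determinant divisible by $2$ is ever needed.
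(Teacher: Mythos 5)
Your proposal is correct and follows essentially the same route as the paper: read off the Gram matrix from Lemma \ref{fact1}, peel off $\langle e\rangle$, and treat the remaining anti-diagonal block as $p$ times a unimodular form whose discriminant is $(-1)^{\lfloor (e-1)/2\rfloor}(e\mu_F)^{e-1}$, concluding for odd $p$ by the classification of unimodular forms over a non-dyadic local ring and for $p=2$ by the same pairing-and-rescaling into hyperbolic planes. The only cosmetic difference is that the paper applies the classification directly to the whole $(e-1)$-dimensional block rather than first splitting it into $2\times 2$ anti-diagonal pieces, so your extra sign bookkeeping (the parity agreement of $(e-1)(e-2)/2$ and $\lfloor (e-1)/2\rfloor$, which does hold) is folded into a single determinant computation there.
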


\begin{proof}
Let $q_{K/F}:=\mathrm{tr}_{K/F}(x^{2})\mid_{O_{K}}$. From Lemma \ref{fact1} we have that $q_{K/F} \cong \left \langle e, p\otimes T \right \rangle$ where $T$ is an $O_{F}$-form of discriminant $\mu_{F}^{e-1}e^{e-1}(-1)^{\left \lfloor\frac{e-1}{2}\right \rfloor}.$ 

\begin{itemize}
\item[(a)] Since $p \neq 2$, every form over $O_{F}$ is diagonalizable. Furthermore, every form over $O_{F}$ with discriminant $d \in O_{F}^{*}$ can be diagonalized to $\langle 1,...,1, d \rangle$. In particular, $T \cong \left \langle 1,...,1, \mu_{F}^{e-1}e^{e-1}(-1)^{\left \lfloor\frac{e-1}{2}\right \rfloor} \right \rangle$ and \[q_{F/K} \cong \left \langle e, p\otimes T \right \rangle \cong  \left \langle e,p,p...,p, p\mu_{F}^{e-1}e^{e-1}(-1)^{\left \lfloor\frac{e-1}{2}\right \rfloor} \right \rangle.\]

\item[(b)] If $p=2$ the form $T$ has Gram matrix equal to \[\left[\begin{array}{ccccc} 0 & ... & ... & 0 & e\mu_{F} \\ \vdots &  &  & \iddots & 0 \\ \vdots &  & e\mu_{F} &  & \vdots \\ 0 & \iddots &   &   & \vdots \\ e\mu_{F} & 0 & ... & ... & 0 \end{array}\right]\ \] i.e., \[T \cong 2e\mu_{F}x_{1}x_{e-1}+2e\mu_{F}x_{2}x_{e-2}+...+2e\mu_{F}x_{\frac{e-1}{2}}x_{\frac{e+1}{2}}.\] Since $e$ is odd we have that $e\mu_{F} \in O_{F}^{*}$. In particular, the transformation 
\begin{align*} x_{i} & \mapsto s_{i}(e\mu_{F})^{-1} \mbox{ for $1 \leq i \leq \frac{e-1}{2}$} \\  x_{j} & \mapsto t_{e-j}  \ \ \ \ \ \ \ \ \mbox{ for  $\frac{e+1}{2} \leq j \leq e-1$}
\end{align*}

shows that \[T \cong 2s_{1}t_{1}+2s_{2}t_{2}+...+2s_{\frac{e-1}{2}}t_{\frac{e-1}{2}}\cong \underbrace{\mathbb{H} \oplus...\oplus \mathbb{H}}_{\frac{e-1}{2}}.\]
 
\end{itemize}

\end{proof}

\subsubsection{From the intermediate extensions to the total extension}

Now that we know the Jordan decomposition of $\mathrm{tr}_{K/F}(x^{2})\mid_{O_{K}}$, we show how with this and with the Jordan decomposition of  $\mathrm{tr}_{F/\Q_{p}}(x^{2})\mid_{O_{F}}$, one can deduce the shape of $\mathrm{tr}_{K/\Q_{p}}(x^{2})\mid_{O_{K}}$.

\begin{lemma}\label{zinovy} Let $K/F$ be an extension of $p$-adic local fields of degree $e$.  Suppose that there are $\displaystyle \alpha_{0},..., \alpha_{e-1}\in O_{F}$ such that 

\[ \mathrm{tr}_{K/F}(x^{2})\mid_{O_{K}}\cong \begin{cases} \left \langle \alpha_{0},..., \alpha_{e-1} \right \rangle & \mbox{if $p \neq 2$, }\\  \left \langle \alpha_{0} \right \rangle \bigoplus \left( \left \langle 2 \right \rangle \otimes(\mathbb{H} \oplus...\oplus \mathbb{H}) \right) & \mbox{if $p=2$.} \end{cases}\] 

Then, we have that 

\[\mathrm{tr}_{K/\Q_{p}}(x^{2})\mid_{O_{K}}\cong \begin{cases} \bigoplus_{i=0}^{e-1} \mathrm{tr}_{F/\Q_{p}}^{\alpha_{i}}(x^{2})\mid_{O_{F}} & \mbox{ if $p\neq 2$,}\\ \mathrm{tr}_{F/\Q_{p}}^{\alpha_{0}}(x^{2})\mid_{O_{F}} \bigoplus \left( \left( \left \langle 2 \right \rangle \otimes(\mathbb{H} \oplus...\oplus \mathbb{H})  \right)\otimes \mathrm{tr}_{F/\Q_{p}}(x^{2})\mid_{O_{F}} \right) & \mbox{ if $p = 2$.} \end{cases}\]  

where $\mathrm{tr}_{F/\Q}^{\alpha_{i}} $ denotes the scaled trace form. In particular, we have  \[\mathrm{tr}_{K/\Q_{p}}(x^{2})\mid_{O_{K}}\cong  \begin{cases} \left( \langle \alpha_{0},...,\alpha_{e-2}\rangle \otimes \mathrm{tr}_{F/\Q_{p}}(x^{2})\mid_{O_{F}} \right ) \bigoplus \mathrm{tr}^{\alpha_{e-1}}_{F/\Q_{p}}(x^{2})\mid_{O_{F}} & \mbox{if $p \neq 2$,} \\ \left ( \langle \alpha_{0} \rangle \bigoplus \left( \left \langle 2 \right \rangle \otimes(\mathbb{H} \oplus...\oplus \mathbb{H}) \right ) \right) \bigotimes \mathrm{tr}_{F/\Q_{p}}(x^{2})\mid_{O_{F}} & \mbox{if $p=2$.} \end{cases}\] whenever $\displaystyle \alpha_{0},..., \alpha_{e-2} \in \Z_{p}$.

\end{lemma}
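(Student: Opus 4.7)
The plan is to reduce everything to the transitivity identity
\[
\mathrm{tr}_{K/\Q_p}(y) \;=\; \mathrm{tr}_{F/\Q_p}\bigl(\mathrm{tr}_{K/F}(y)\bigr)
\]
applied to products of pure tensors. Concretely, I would fix an $O_F$-basis $\mathfrak{B} = \{b_0,\ldots,b_{e-1}\}$ of $O_K$ realizing the hypothesized decomposition of $\mathrm{tr}_{K/F}(x^2)|_{O_K}$ over $O_F$, and a $\Z_p$-basis $\mathcal{C} = \{c_1,\ldots,c_f\}$ of $O_F$. Then $\{b_i c_j\}$ is a $\Z_p$-basis of $O_K$, and by $O_F$-linearity of $\mathrm{tr}_{K/F}$,
\[
\mathrm{tr}_{K/\Q_p}\bigl(b_i c_j \cdot b_{i'} c_{j'}\bigr) \;=\; \mathrm{tr}_{F/\Q_p}\!\bigl(c_j c_{j'}\,\mathrm{tr}_{K/F}(b_i b_{i'})\bigr).
\]
All that remains is to read off the Gram matrix in this product basis.

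For $p \neq 2$ the hypothesis gives $\mathrm{tr}_{K/F}(b_i b_{i'}) = \alpha_i\,\delta_{ii'}$, so the Gram matrix in $\{b_i c_j\}$ is block-diagonal, the $i$-th block being the Gram matrix of the scaled trace $\mathrm{tr}_{F/\Q_p}^{\alpha_i}(x^2)|_{O_F}$ in $\mathcal{C}$. This immediately yields the first displayed isometry. For $p=2$ the same argument handles the $b_0$-contribution (producing $\mathrm{tr}_{F/\Q_p}^{\alpha_0}(x^2)|_{O_F}$), while each hyperbolic pair $\{b,b'\}$ --- for which $\mathrm{tr}_{K/F}(b^2) = \mathrm{tr}_{K/F}((b')^2) = 0$ and $\mathrm{tr}_{K/F}(b b') = 2$ --- contributes a $2f \times 2f$ sub-block of shape $\bigl(\begin{smallmatrix} 0 & 2G \\ 2G & 0 \end{smallmatrix}\bigr)$, where $G$ is the Gram matrix of $\mathrm{tr}_{F/\Q_p}(x^2)|_{O_F}$ in $\mathcal{C}$. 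This is precisely the Gram matrix of $(\langle 2\rangle\otimes\mathbb{H})\otimes \mathrm{tr}_{F/\Q_p}(x^2)|_{O_F}$, and summing over the hyperbolic summands yields the second formula.

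The ``In particular'' clause follows by invoking the elementary fact that for $\alpha \in \Z_p$ one has $\mathrm{tr}_{F/\Q_p}^{\alpha}(x^2)|_{O_F} \cong \langle\alpha\rangle \otimes \mathrm{tr}_{F/\Q_p}(x^2)|_{O_F}$ over $\Z_p$; distributing the tensor across the orthogonal sum then collapses the first $e-1$ blocks (resp.\ the $\alpha_0$-summand in the $p=2$ case) into a single tensor product, while the $\alpha_{e-1}$-block must remain in its scaled form since $\alpha_{e-1}$ need not lie in $\Z_p$. I do not anticipate a serious obstacle: the whole lemma is essentially an exercise in the transitivity of trace combined with bilinearity of the tensor product. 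The only bookkeeping care needed is to keep straight over which ring a tensor product is being taken, and to track exactly which of the $\alpha_i$ are in $\Z_p$ so that the rewriting as $\langle\alpha_i\rangle\otimes(-)$ is legitimate.
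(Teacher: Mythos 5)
Your proposal is correct and follows essentially the same route as the paper: both proofs fix an $O_{F}$-basis of $O_{K}$ realizing the hypothesized decomposition, tensor it with a $\Z_{p}$-basis of $O_{F}$, and use transitivity of the trace to read off the Gram matrix block by block, with the hyperbolic pairs contributing blocks of the form $\bigl(\begin{smallmatrix} 0 & 2G \\ 2G & 0 \end{smallmatrix}\bigr)$. Your explicit justification of the ``in particular'' clause via $\mathrm{tr}^{\alpha}_{F/\Q_{p}}(x^{2})\mid_{O_{F}} \cong \langle \alpha \rangle \otimes \mathrm{tr}_{F/\Q_{p}}(x^{2})\mid_{O_{F}}$ for $\alpha \in \Z_{p}$ is a small but welcome addition the paper leaves implicit.
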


\begin{proof} Let $q_{K/F}:=\mathrm{tr}_{K/F}(x^{2})\mid_{O_{K}}$ and $q_{K}:=\mathrm{tr}_{K/\Q_{p}}(x^{2})\mid_{O_{K}}.$  

\begin{itemize}

\item[(a)] Let $p=2$. By hypothesis there is a basis $\{ w_{0}, w_{1}, \overline{ w_{1}},...,w_{\frac{e-1}{2}}, \overline{ w_{\frac{e-1}{2}}} \}$ for $O_{K}/O_{F}$ such that \[O_{K} \cong_{O_{F}} W_{0} \oplus ... \oplus W_{\frac{e-1}{2}}\]
where $W_{i}=:\mathrm{span}_{O_{F}} \{w_{i},\overline{ w_{i}}\}$ , $\overline{w_{0}} :=w_{0}$ and the direct sum is an orthogonal decomposition respect to the form $q_{K/F}$. Furthermore, $q_{K/F}\mid_{W_{0}} = \langle \alpha_{0} \rangle$ in the basis $\{w_{0}\}$ and $q_{K/F}\mid_{W_{0}}=\langle 2 \rangle \otimes\mathbb{H}$ in the basis $\{w_{i},\overline{ w_{i}}\}$. By the transitive property of the trace we have that the $\Z_{p}$-modules $W_{i}, W_{j}$ are orthogonal with respect to $q_{K}$.  Hence, \[q_{K} \cong_{\Z_{p}} q_{K}\mid_{W_{0}} \oplus ... \oplus q_{K}\mid_{W_{\frac{e-1}{2}}.}\] Moreover, $q_{K}\mid_{W_{0}} \cong \mathrm{tr}_{F/\Q_{p}}^{\alpha_{0}}(x^{2})\mid_{O_{F}}$. Indeed, let $\alpha =xw_{0}$ for some $x \in O_{F}$. Then $\mathrm{tr}_{K/\Q_{p}}((xw_{0})^{2})=\mathrm{tr}_{F/\Q_{p}}(\mathrm{tr}_{K/F}((xw_{0})^2)) = \mathrm{tr}_{F/\Q_{p}}(\mathrm{tr}_{K/F}(w_{0}^2)x^2)=\mathrm{tr}_{F/\Q_{p}}(\alpha_{0}x^{2}).$ Finally let $\{v_{1},...,v_{f}\}$ be a $\Z_{p}$-basis for $O_{F}$, and suppose $i>0$. Let $\{e_{1},...,e_{2f}\}$ be the  $\Z_{p}$-basis for $O_{K}$ given by $\{v_{1}w_{i},...,v_{f}w_{i},v_{1}\overline{w_{i}},...,v_{f}\overline{w_{i}} \}$. Using the transitive property again we have that \[ \mathrm{tr}_{K/\Q_{p}}(e_{k}e_{l})=0=\mathrm{tr}_{K/\Q_{p}}(e_{k+f}e_{l+f})\] and that \[\mathrm{tr}_{K/\Q_{p}}(e_{k+f}e_{l})=2\mathrm{tr}_{F/\Q_{p}}(v_{k}v_{l})\] for all $1 \leq k,l \leq f$. In particular,  $q_{K}\mid_{W_{i}} \cong (\langle 2 \rangle \otimes \mathbb{H}) \otimes \mathrm{tr}_{F/\Q_{p}}(x^{2})\mid_{O_{F}}$ from which the result follows.\\

\item[(b)] The case $p\neq 2$ follows exactly as in part (a) with $W_{0}$.

\end{itemize}

\end{proof}

\begin{corollary}\label{FirstStepTrace} Let $p \neq 2$ and let $K/\Q_{p}$ be a finite extension without wild ramification. Let  $F/\Q_{p}$ be the maximal unramified sub extension of $K$, and let $e$ be as usual. Then, there exists $\mu_{F} \in O_{F}^{*}$ such that 
\[\mathrm{tr}_{K/\Q_{p}}(x^{2})\mid_{O_{K}}\cong  \left(\underbrace{\left \langle e,p,...,p\right \rangle}_{e-1} \otimes \mathrm{tr}_{F/\Q_{p}}(x^{2})\mid_{O_{F}}\right) \bigoplus \left( \left \langle pe^{e-1}(-1)^{\left \lfloor\frac{e-1}{2}\right \rfloor}\right \rangle \otimes \mathrm{tr}^{\mu_{F}^{(e-1)}}_{F/\Q_{p}}(x^{2}) \mid_{O_{F}}\right).\] 
\end{corollary}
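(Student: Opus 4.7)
The plan is to stack Theorem \ref{KoverF}(a) on top of Lemma \ref{zinovy}. First I would apply Theorem \ref{KoverF}(a) to the totally tamely ramified extension $K/F$ of degree $e$, obtaining a unit $\mu_F \in O_F^*$ together with an $O_F$-diagonalization
\[\mathrm{tr}_{K/F}(x^{2})\mid_{O_{K}} \cong \langle \alpha_0, \alpha_1, \ldots, \alpha_{e-1}\rangle,\]
where $\alpha_0 = e$, $\alpha_1=\cdots=\alpha_{e-2}=p$, and $\alpha_{e-1} = p\,\mu_F^{e-1}e^{e-1}(-1)^{\lfloor (e-1)/2 \rfloor}$.

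Next I would observe that the first $e-1$ diagonal entries lie in $\Z_p$: $\alpha_0=e$ is a rational integer (in fact a $p$-adic unit, by tameness), and the remaining entries are literally $p$. This is precisely the hypothesis needed to invoke the ``in particular'' clause of Lemma \ref{zinovy} in the $p\neq 2$ case. Applying that clause gives immediately
\[\mathrm{tr}_{K/\Q_p}(x^{2})\mid_{O_K} \cong \Bigl(\langle e,p,\ldots,p\rangle \otimes \mathrm{tr}_{F/\Q_p}(x^2)\mid_{O_F}\Bigr) \oplus \mathrm{tr}_{F/\Q_p}^{\alpha_{e-1}}(x^2)\mid_{O_F},\]
where the coefficient form on the first summand already matches the $\underbrace{\langle e,p,\ldots,p\rangle}_{e-1}$ appearing in the statement.

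It only remains to rewrite the second summand in the desired shape. I would factor the scalar as $\alpha_{e-1} = pe^{e-1}(-1)^{\lfloor (e-1)/2\rfloor} \cdot \mu_F^{e-1}$, with the first factor in $\Z_p$ and the second a unit in $O_F$, and then use the elementary identity $\mathrm{tr}^{ab}_{F/\Q_p}(x^2)\mid_{O_F} \cong \langle a\rangle \otimes \mathrm{tr}^{b}_{F/\Q_p}(x^2)\mid_{O_F}$, valid for any $a\in\Z_p$ and $b\in O_F$, to peel off the $\Z_p$-scalar and recover $\langle pe^{e-1}(-1)^{\lfloor (e-1)/2\rfloor}\rangle \otimes \mathrm{tr}^{\mu_F^{e-1}}_{F/\Q_p}(x^2)\mid_{O_F}$.

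There is no genuine obstacle, since the substantive content has been absorbed by Theorem \ref{KoverF} and Lemma \ref{zinovy}; what remains is a bookkeeping check that the $O_F$-diagonal entries from Theorem \ref{KoverF}(a) are actually $p$-adically integral (so that Lemma \ref{zinovy} applies in its sharpest form) and a one-line splitting of a product of scalars. Both of these are immediate from the tame ramification hypothesis, so the corollary follows by direct substitution.
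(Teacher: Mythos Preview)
Your proposal is correct and follows exactly the same route as the paper, which simply states that the corollary follows immediately from Theorem~\ref{KoverF} and Lemma~\ref{zinovy}. You have just made explicit the bookkeeping (checking $\alpha_0,\dots,\alpha_{e-2}\in\Z_p$ and splitting off the $\Z_p$-scalar from $\alpha_{e-1}$) that the paper leaves implicit.
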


\begin{proof}
This follows immediately from Theorem \ref{KoverF} and Lemma \ref{zinovy}.
\end{proof}

\subsubsection{Maximal unramified extension}

Here we start by studying the trace form on the bottom extension, i.e., $\mathrm{tr}_{F/\Q_{p}}(x^{2})\mid_{O_{F}}$.

\begin{lemma}\label{odd}
Let $F/E$ be a degree $f$ cyclic Galois extension of discriminant $d$. Then, $d$ is a square if and only if $f$ is odd.
\end{lemma}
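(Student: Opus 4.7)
The plan is to apply the classical Galois-theoretic criterion that the discriminant $d$ of a separable extension $F/E$ of degree $n$ is a square in $E$ if and only if the image of $\mathrm{Gal}(\widetilde{F}/E)$ in $S_{n}$, given by its action on the $n$ embeddings $F \hookrightarrow \overline{E}$, is contained in the alternating group $A_{n}$. Since the extension in the lemma is already Galois, the Galois closure $\widetilde{F}$ coincides with $F$ and the permutation group in question is simply the image of $\mathrm{Gal}(F/E) \cong C_{f}$ under its natural action on the $f$ embeddings.

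To justify the criterion, I would fix a basis $\alpha_{1},\ldots,\alpha_{n}$ of $F/E$ and set $\delta := \det(\sigma_{i}(\alpha_{j}))$, where $\sigma_{1},\ldots,\sigma_{n}$ are the embeddings. Then $\delta^{2}=d$, and for any $\tau \in \mathrm{Gal}(\widetilde{F}/E)$ inducing the permutation $\pi_{\tau}$ on the $\sigma_{i}$, one has $\tau(\delta) = \mathrm{sgn}(\pi_{\tau})\,\delta$. Hence $\delta \in E$, i.e., $d$ is a square in $E$, precisely when every $\pi_{\tau}$ is even, i.e., when the image of the Galois group is contained in $A_{n}$.

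To specialize to the cyclic case, I observe that $C_{f} = \langle \tau \rangle$ acts on the $f$ embeddings of $F$ through its regular representation, so $\tau$ is sent to a single $f$-cycle in $S_{f}$. An $f$-cycle is a product of $f-1$ transpositions, so its sign is $(-1)^{f-1}$, which equals $+1$ exactly when $f$ is odd. Combining this with the criterion above gives the desired equivalence. The only real piece of content is the sign computation on $\delta$; the cyclic-permutation step is routine, so I do not anticipate any substantial obstacle.
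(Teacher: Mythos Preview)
Your argument is correct and is the standard proof via the discriminant--sign criterion for Galois extensions. The paper itself does not give a proof of this lemma at all---it simply declares the result elementary and leaves it to the reader---so your proposal supplies exactly what the paper omits.
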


\begin{proof}
This is an elementary result which we leave to the reader.
\end{proof}

\begin{proposition}\label{SecondStepTrace}
Let $p\neq 2 $. Let $F/\Q_{p}$ be the unique unramified extension of degree $f$, and let $\alpha \in O_{F}^{*}$. Then, \[\mathrm{tr}^{\alpha}_{F/\Q_{p}}(x^{2}) \mid_{O_{F}} \cong \langle 1,...,1,{\rm N}_{F/\Q_{p}}(\alpha)u_{p}^{f-1}\rangle. \]  
\end{proposition}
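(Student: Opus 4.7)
\medskip

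\noindent\textbf{Proof plan for Proposition \ref{SecondStepTrace}.}

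The plan is to exhibit $\mathrm{tr}^{\alpha}_{F/\Q_p}(x^{2})\mid_{O_{F}}$ as a unimodular $\Z_p$-form of rank $f$ and then apply the same classification result (Cassels VIII, \S3, Lemma 3.4) that was used just above, reducing everything to a discriminant computation. Concretely, pick any $\Z_p$-basis $v_1,\dots,v_f$ of $O_F$. The Gram matrix of the scaled trace form is $G_\alpha=(\mathrm{tr}_{F/\Q_p}(\alpha v_i v_j))_{i,j}$. Writing $A$ for the matrix of multiplication by $\alpha$ in this basis, one has $G_\alpha = G\cdot A$ where $G=(\mathrm{tr}_{F/\Q_p}(v_iv_j))$ is the Gram matrix of the unscaled trace form. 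Hence
\[
\det(G_\alpha)=\det(G)\cdot\det(A)= d(F/\Q_p)\cdot N_{F/\Q_p}(\alpha),
\]
and both factors lie in $\Z_p^{*}$: the first because $F/\Q_p$ is unramified, the second because $\alpha\in O_F^{*}$. So the form is unimodular of rank $f$ over $\Z_p$.

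Next I would invoke the standard diagonalization of unimodular quadratic forms over $\Z_p$ for odd $p$: any such form of rank $f$ is isometric to $\langle 1,\dots,1,\delta\rangle$, where $\delta$ is the determinant taken modulo $(\Z_p^{*})^{2}$. This is the same reference already used in the previous lemma of the paper, so I can cite it directly. Applied here, it gives
\[
\mathrm{tr}^{\alpha}_{F/\Q_p}(x^{2})\mid_{O_{F}}\;\cong\;\bigl\langle 1,\dots,1,\; d(F/\Q_p)\cdot N_{F/\Q_p}(\alpha)\bigr\rangle.
\]
Thus the statement reduces to the claim $d(F/\Q_p)\equiv u_p^{f-1}\pmod{(\Z_p^{*})^{2}}$.

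For this last step I would use Lemma \ref{odd}. Since $F/\Q_p$ is the (unique) unramified extension of degree $f$, it is cyclic Galois over $\Q_p$; Lemma \ref{odd} then says $d(F/\Q_p)$ is a square in $\Z_p^{*}$ if and only if $f$ is odd. On the other hand $u_p^{f-1}$ is a square in $\Z_p^{*}$ if and only if $f-1$ is even, i.e.\ $f$ is odd. As $p$ is odd, $\Z_p^{*}/(\Z_p^{*})^{2}$ has order two and is generated by $u_p$, so two units are congruent mod squares as soon as they have the same square/non-square status. Therefore $d(F/\Q_p)\equiv u_p^{f-1}\pmod{(\Z_p^{*})^{2}}$, and the proposition follows.

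The only genuinely non-trivial input is the Cassels diagonalization lemma; everything else is bookkeeping. The step that deserves the most care is the identity $\det(G_\alpha)=\det(G)\det(A)$ and the resulting identification of the determinant with $d(F/\Q_p)\cdot N_{F/\Q_p}(\alpha)$, so I would write that out explicitly to be sure no spurious factor appears.
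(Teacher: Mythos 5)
Your proposal is correct and follows essentially the same route as the paper: compute $\mathrm{disc}(q_F^{\alpha})=d\cdot N_{F/\Q_p}(\alpha)$, observe it is a unit so the unimodular form diagonalizes as $\langle 1,\dots,1,\ d\cdot N_{F/\Q_p}(\alpha)\rangle$, and then identify $d$ with $u_p^{f-1}$ via Lemma \ref{odd}. The only (harmless) divergence is in the even-$f$ case, where you use that $\Z_p^{*}/(\Z_p^{*})^{2}$ has order two so any non-square unit equals $u_p$ mod squares, whereas the paper identifies $\Q_p(\sqrt{d})$ as the unique unramified quadratic extension of $\Q_p$; both settle that step.
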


\begin{proof}
Let $q_{F}^{\alpha}:=\mathrm{tr}^{\alpha}_{F/\Q_{p}}(x^{2}) \mid_{O_{F}}$. A simple calculation shows that disc$(q_{F}^{\alpha})=d\cdot{\rm N}_{F/\Q_{p}}(\alpha)$, where $d$ is the discriminant of $F$. Since $F/\Q_{p}$ is unramified and $\alpha \in O_{F}^*$ we have that ${\rm N}_{F/\Q_{p}}(\alpha)\cdot d \in \Z_{p}^{*}$, in particular \[q_{F}^{\alpha} \cong \langle 1,...,1, {\rm N}_{F/\Q_{p}}(\alpha) \cdot d\rangle.\] By Lemma \ref{odd} we have that $d=1$ if and only if $f$ is odd. Conversely, if $f$ is even $\Q_{p}(\sqrt{d})/\Q_{p}$ is the unique quadratic sub extension of $F/\Q_{p}$, thus $\Q_{p}(\sqrt{d})$ is the unique unramified quadratic extension of $\Q_{p}$ and it follows that $d=u_{p} \bmod (\Z_{p}^*)^2$. Summarizing  
\[d=
\begin{cases}
1,        &  \mbox{if $f$ is odd,}\\
u_{p} &  \mbox{if $f$ is even.} \\
\end{cases}\] i.e., $d=u_{p}^{f-1}$.
\end{proof}

\begin{remark}
Notice that by the same argument used above, we have that $d=u_{p}^{f-1}$  for $p=2$.
\end{remark}

\begin{proposition}\label{unra2}
Let $F/\Q_{2}$ be the unique unramified extension of degree $f$. Then, \[\mathrm{tr}_{F/\Q_{2}}(x^{2}) \mid_{O_{F}} \cong \langle 1,...,1,(-1)^{f-1}, (-u_{2})^{f-1}\rangle. \]  
\end{proposition}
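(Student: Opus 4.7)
The plan is to identify $q_F := \mathrm{tr}_{F/\Q_2}(x^2)\mid_{O_F}$ as a specific unimodular $\Z_2$-lattice and then match it with the target form $T := \langle 1, \ldots, 1, (-1)^{f-1}, (-u_2)^{f-1}\rangle$ by comparing $\Z_2$-isometry invariants.

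First I would establish that $q_F$ is a unimodular $\Z_2$-lattice of odd (Type I) type with $\det q_F \equiv u_2^{f-1} \pmod{(\Z_2^*)^2}$. Unimodularity is immediate because $F/\Q_2$ is unramified, so the different $\mathfrak{d}_{F/\Q_2}$ is trivial and $\mathrm{disc}(O_F/\Z_2)\in\Z_2^{*}$. The value of the discriminant is exactly the content of the Remark immediately preceding the statement. To see that $q_F$ is of odd type, I would use the identity
\[
\mathrm{tr}_{F/\Q_2}(v^2) \equiv \bigl(\mathrm{tr}_{F/\Q_2}(v)\bigr)^2 \pmod{2},
\]
which holds because $\bigl(\sum_\sigma \sigma(v)\bigr)^2 \equiv \sum_\sigma \sigma(v)^2 \pmod{2}$, combined with the surjectivity of the residue trace $\mathbb{F}_{2^f}\to \mathbb{F}_2$ (a consequence of separability); this produces some $v \in O_F$ with $q_F(v)$ an odd unit. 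On the other side, $T$ is visibly unimodular, of Type I, and has determinant $(-1)^{f-1}(-u_2)^{f-1}=u_2^{f-1}$, so $q_F$ and $T$ agree on rank, type, and discriminant modulo squares.

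Finally I would invoke the classification of unimodular $\Z_2$-lattices of Type I: such a lattice is determined up to $\Z_2$-isometry by its rank, its discriminant in $\Z_2^{*}/(\Z_2^{*})^2$, and its Hasse--Witt invariant over $\Q_2$. The first two invariants already match, so only the Hasse--Witt invariants must be compared, and this comparison is the main obstacle. One reasonable route is Serre's formula expressing the second Stiefel--Whitney class of a trace form in terms of the discriminant of the extension; an alternative, more hands-on route uses the normal integral basis theorem (available since the unramified extension $F/\Q_2$ is tame) to write the Gram matrix of $q_F$ as a circulant matrix and diagonalize it explicitly over $\Z_2$, after which the comparison with $T$ reduces to a finite check.
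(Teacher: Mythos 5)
Your strategy is essentially the paper's own: both arguments classify $q_F$ among unimodular $\Z_2$-lattices by matching rank, discriminant, type, and the Hasse--Witt invariant of the ambient $\Q_2$-space. Your reduction is sound --- an odd unimodular $\Z_2$-lattice is indeed determined by rank, discriminant mod squares, and Hasse--Witt invariant; the paper packages the same fact as ``lattices on the same quadratic space with the same norm group,'' citing O'Meara 93:16 together with Maurer's Lemma 3, and your direct verification of oddness via $\mathrm{tr}(v^2)\equiv(\mathrm{tr}(v))^2 \pmod 2$ plus surjectivity of the residue trace is a nice self-contained substitute for the norm-group citation. The one real issue is that the step you defer as ``the main obstacle'' is exactly where the content of the proof lies: the paper resolves it in one line by quoting Conner--Yui [Lemma 2.3], which gives the Hasse--Witt invariant of the unramified dyadic trace lattice as $(2,d)_2=(2,5^{f-1})_2=(-1)^{f-1}$, matching the target form's invariant $((-1)^{f-1},(-5)^{f-1})_2=(-1)^{f-1}$. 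Both of your proposed routes are viable in principle --- Serre's formula requires you to evaluate the corestriction term for the unramified extension, and the normal-integral-basis/circulant route requires an actual $\Z_2$-diagonalization that you have not carried out --- so as written the decisive computation is a genuine to-do rather than a proof; supplying the Conner--Yui formula (or an equivalent computation) closes it.
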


\begin{proof}
Let $q:=\langle 1,...,1,(-1)^{f-1}, (-u_{2})^{f-1}\rangle$. The forms $q$ and $q_{F}:=\mathrm{tr}_{F/\Q_{2}}(x^{2}) \mid_{O_{F}}$ are both quadratic forms over $\Z_2$ with same discriminant, say $d$, and dimension. Moreover, thanks to \cite[Lemma 2.3]{conneryui}, we have that $q_{F}$ has Hasse-Witt invariant equal to \[(2,d)_{2}=(2,u_{2}^{f-1})_{2}=(2,5^{f-1})_{2}=(-1)^{f-1}.\] Since $q$ also has the same Hasse invariant, which follows from \[((-1)^{f-1},(-5)^{f-1})_{2}=(-1)^{f-1},\] the two forms can be considered as lattices inside a quadratic space over $\Q_{2}$. Since both forms have the same norm group, see \cite[Lemma 3]{Mau}, we have by  \cite[93:16]{Om}  that $q_{F} \cong q.$
\end{proof}

\subsubsection{Pasting it together}

Finally, using all of the above intermediate steps we can find a Jordan decomposition for $\mathrm{tr}_{K/\Q_{p}}(x^{2})\mid_{O_{K}}$.

\begin{theorem}\label{localgeneral}

Let $K/\Q_{p}$ be a finite extension without wild ramification, and let $e,f$ be the ramification and residue degrees of $K/\Q_{p}$. 

\begin{itemize}

\item[(a)]
If $p \neq 2$ there exits $\nu_{K} \in \Z_{p}^{*}$ such that 
\[\mathrm{tr}_{K/\Q_{p}}(x^{2}) \mid_{O_{K}} \cong \underbrace{\langle 1,...,1,e^{f}u_{p}^{f-1}\rangle}_{f} \bigoplus   \left \langle p \right \rangle \otimes \underbrace{\left \langle 1,...,1,(-1)^{f\left \lfloor\frac{e-1}{2}\right \rfloor}(e^{f}u_{p}^{f-1}\nu_{K})^{e-1} \right \rangle}_{f(e-1)}.\]

\item[(b)] If $p=2$ then, 
\[\mathrm{tr}_{K/\Q_{2}}(x^{2}) \mid_{O_{K}} \cong \underbrace{\langle e,...,e,e(-1)^{f-1}, e(-u_{2})^{f-1}\rangle}_{f} \bigoplus   \left \langle 2 \right \rangle \otimes \underbrace{\mathbb{H}\oplus...\oplus \mathbb{H}}_{\frac{f(e-1)}{2}}.\]

\end{itemize}

\end{theorem}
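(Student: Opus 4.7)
The plan is to split into the two cases $p \neq 2$ and $p=2$, in each case assembling the Jordan decomposition from the intermediate results that have just been established.

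For $p \neq 2$, I would start from Corollary \ref{FirstStepTrace}, which already writes $\mathrm{tr}_{K/\Q_{p}}(x^{2})\mid_{O_{K}}$ as a direct sum of two tensor products involving $\mathrm{tr}_{F/\Q_{p}}(x^{2})\mid_{O_{F}}$ and $\mathrm{tr}^{\mu_{F}^{e-1}}_{F/\Q_{p}}(x^{2})\mid_{O_{F}}$. Next apply Proposition \ref{SecondStepTrace} twice (to $\alpha=1$ and $\alpha=\mu_{F}^{e-1}$) to diagonalize those two scaled traces as $\langle 1,\ldots,1,u_{p}^{f-1}\rangle$ and $\langle 1,\ldots,1,N_{F/\Q_{p}}(\mu_{F})^{e-1}u_{p}^{f-1}\rangle$ respectively. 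Distribute the outer tensor products to obtain an explicit diagonal form, then group the entries according to their $p$-adic valuation: one $\Z_{p}$-unimodular block of rank $f$ coming from the $\langle e\rangle$ tensor (since $e \in \Z_{p}^{*}$ under tame ramification), and one block of rank $f(e-1)$ scaled by $\langle p\rangle$.

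Within each valuation stratum, invoke \cite[VIII, \S3, Lemma 3.4]{cassels} (used already in the excerpt) to conclude that a diagonal form over $\Z_{p}$ is determined by its rank and its discriminant modulo $(\Z_{p}^{*})^{2}$. Computing the discriminant of the rank-$f$ unimodular piece gives $e^{f}u_{p}^{f-1}$, so the first summand is isometric to $\langle 1,\ldots,1,e^{f}u_{p}^{f-1}\rangle$. The discriminant of the rank-$f(e-1)$ piece, after pulling out the factor $\langle p\rangle$, produces the expression $(-1)^{f\lfloor(e-1)/2\rfloor}(e^{f}u_{p}^{f-1})^{e-1}N_{F/\Q_{p}}(\mu_{F})^{e-1}$; defining $\nu_{K}\in\Z_{p}^{*}$ to be any representative of $N_{F/\Q_{p}}(\mu_{F})$ modulo $(\Z_{p}^{*})^{2}$ packages this into the stated form. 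This is also the element whose global counterpart is the $\nu_{p}^{L}$ referenced in Definition \ref{thirdram}.

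For $p=2$, start from Theorem \ref{KoverF}(b), which gives $\mathrm{tr}_{K/F}(x^{2})\mid_{O_{K}} \cong \langle e\rangle \oplus \langle 2\rangle \otimes \mathbb{H}^{(e-1)/2}$; here $e$ is odd because the ramification is tame. Apply the $p=2$ case of Lemma \ref{zinovy} to deduce
\[\mathrm{tr}_{K/\Q_{2}}(x^{2})\mid_{O_{K}} \cong \mathrm{tr}_{F/\Q_{2}}^{e}(x^{2})\mid_{O_{F}}\ \bigoplus\ \left(\langle 2\rangle \otimes \mathbb{H}^{(e-1)/2}\right)\otimes \mathrm{tr}_{F/\Q_{2}}(x^{2})\mid_{O_{F}}.\]
For the first summand, feed Proposition \ref{unra2} into the scaled trace: scaling the diagonal form $\langle 1,\ldots,1,(-1)^{f-1},(-u_{2})^{f-1}\rangle$ by $e$ reproduces $\langle e,\ldots,e,e(-1)^{f-1},e(-u_{2})^{f-1}\rangle$ exactly. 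For the second summand, use the classical absorption fact that $\mathbb{H}\otimes U$ is hyperbolic for every quadratic $\Z_{2}$-form $U$; since $\mathrm{tr}_{F/\Q_{2}}(x^{2})\mid_{O_{F}}$ is unimodular of rank $f$, this yields $\mathbb{H}^{(e-1)/2}\otimes \mathrm{tr}_{F/\Q_{2}}(x^{2})\mid_{O_{F}} \cong \mathbb{H}^{f(e-1)/2}$, and the extra $\langle 2\rangle$ scaling is preserved.

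The main obstacle is the bookkeeping in the $p \neq 2$ case: one must carefully separate the rank-$f$ unit-valuation block from the rank-$f(e-1)$ block of valuation one, and verify that the accumulated discriminants have exactly the shape asserted. Once the role of $\nu_{K}\equiv N_{F/\Q_{p}}(\mu_{F))\pmod{(\Z_{p}^{*})^{2}}$ is identified, the remaining manipulations are routine applications of the Jordan-block classification of $\Z_{p}$-lattices.
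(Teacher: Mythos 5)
Your proposal is correct and takes essentially the same route as the paper's proof: case (a) assembles Corollary \ref{FirstStepTrace} and Proposition \ref{SecondStepTrace}, sets $\nu_{K}=N_{F/\Q_{p}}(\mu_{F})$, and regroups the diagonal entries by valuation using the rank-and-discriminant classification of unimodular $\Z_{p}$-forms, while case (b) combines Theorem \ref{KoverF}, Lemma \ref{zinovy}, Proposition \ref{unra2} and the hyperbolic absorption statement (the paper's Lemma \ref{aux2}). The only caveat is that the absorption fact should be stated for \emph{unimodular} $U$ rather than arbitrary $U$, but since you apply it to the unimodular form $\mathrm{tr}_{F/\Q_{2}}(x^{2})\mid_{O_{F}}$ this does not affect the argument.
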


\begin{proof} Let $q_{K}:=\mathrm{tr}_{K/\Q_{p}}(x^{2}) \mid_{O_{K}}$ and let $F/\Q_{p}$ the maximal unramified sub extension of $K$. 

\begin{itemize}

\item[(a)] Suppose $p\neq 2$, and let $\nu_{K}:= N_{F/\Q_{p}}(\mu_{F})$ where $\mu_{F}$ is a unit in $O_{F}$ as in Lemma \ref{fact1}. Thanks to Corollary \ref{FirstStepTrace} and Proposition \ref{SecondStepTrace} we have that 
\[ q_{K} \cong \underbrace{\left \langle e,p,...,p\right \rangle}_{e-1} \otimes \underbrace{\left \langle 1,...,1,u_{p}^{f-1} \right \rangle}_{f} \bigoplus \left \langle p \right  \rangle \otimes \underbrace{\left \langle 1,...,1, e^{(e-1)f}(-1)^{\left \lfloor\frac{e-1}{2}\right \rfloor f} \nu_{K}^{e-1}u_{p-1}^{f-1}\right \rangle}_{f}\]

\begin{align*}
& \cong   \underbrace{\left \langle 1,...,1,e^{f}u_{p}^{f-1} \right \rangle}_{f} \bigoplus \left \langle p \right \rangle \otimes\underbrace{\left \langle 1,...,1,u_{p}^{(e-2)(f-1)} \right \rangle}_{f(e-2)} \bigoplus \left \langle p \right  \rangle \otimes \underbrace{\left \langle 1,...,1, e^{(e-1)f}(-1)^{\left \lfloor\frac{e-1}{2}\right \rfloor f} \nu_{K}^{e-1}u_{p-1}^{f-1}\right \rangle}_{f} \\
& \cong  \underbrace{\left \langle 1,...,1,e^{f}u_{p}^{f-1} \right \rangle}_{f} \bigoplus  \left \langle p \right \rangle \otimes \underbrace{\left \langle 1,...,1,(-1)^{f \left \lfloor\frac{e-1}{2}\right \rfloor}(e^{f}u_{p}^{f-1}\nu_{K})^{e-1}\right \rangle}_{f(e-1)}.
\end{align*}

\item[(b)] By Lemma \ref{aux2} any unimodular form $T$ over $\Z_{2}$ of dimension $f$ we have that \[\mathbb{H}\otimes T \cong \underbrace{\mathbb{H}\oplus...\oplus \mathbb{H}}_{f}.\] 
Thus, the case $p=2$ follows from Theorem \ref{KoverF},  Lemma \ref{zinovy},  and Proposition \ref{unra2}. 
\end{itemize}

\end{proof}

\begin{lemma}\label{aux2}
Let $T$ be a $\Z_{2}$-quadratic form of dimension $f$ such that disc$(T) \in \Z_{2}^{*}$. Then, \[\mathbb{H}\otimes T \cong \underbrace{\mathbb{H}\oplus...\oplus \mathbb{H}}_{f}.\]
\end{lemma}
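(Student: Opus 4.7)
The plan is to exhibit an explicit change of basis that trivializes $\mathbb{H} \otimes T$ into $f$ hyperbolic planes, using unimodularity of $T$ in precisely the place where it is needed (to keep the change of basis integral). No deep structure theorem is required.

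First I will fix a hyperbolic basis $\{e_1,e_2\}$ of $\mathbb{H}$ with associated bilinear form satisfying $B_{\mathbb{H}}(e_1,e_1)=B_{\mathbb{H}}(e_2,e_2)=0$ and $B_{\mathbb{H}}(e_1,e_2)=1$, and a $\Z_2$-basis $v_1,\dots,v_f$ of $T$. Let $M\in M_f(\Z_2)$ be the symmetric Gram matrix of $B_T$ in this basis; by hypothesis $\det(M)\in \Z_2^*$, so $M\in \mathrm{GL}_f(\Z_2)$. In the ordered basis $\{e_1\otimes v_i\}_{i=1}^{f}\cup\{e_2\otimes v_j\}_{j=1}^{f}$ of $\mathbb{H}\otimes T$, the tensor product formula $B_{\mathbb{H}\otimes T}(e_a\otimes v_k,\,e_b\otimes v_\ell)=B_{\mathbb{H}}(e_a,e_b)\,B_T(v_k,v_\ell)$ shows that the Gram matrix of $\mathbb{H}\otimes T$ is the block matrix $\begin{pmatrix} 0 & M \\ M & 0 \end{pmatrix}$. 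In particular, the sublattice spanned by $\{e_1\otimes v_i\}$ is totally isotropic and of half the rank.

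Next, since $M\in \mathrm{GL}_f(\Z_2)$ I can define the dual basis $w_1,\dots,w_f$ of $T$ by $B_T(v_i,w_j)=\delta_{ij}$, i.e.\ $w_j:=\sum_i (M^{-1})_{ij}v_i$. The crucial point, and the only place where unimodularity enters, is that $M^{-1}\in \mathrm{GL}_f(\Z_2)$, so $w_1,\dots,w_f$ is again an integral $\Z_2$-basis of $T$. Replacing the second half of the chosen basis of $\mathbb{H}\otimes T$ by $\{e_2\otimes w_j\}$ produces a new $\Z_2$-basis, in which the Gram matrix becomes $\begin{pmatrix} 0 & I_f \\ I_f & 0 \end{pmatrix}$.

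Finally I will reorder the basis as $(e_1\otimes v_1,\,e_2\otimes w_1,\,e_1\otimes v_2,\,e_2\otimes w_2,\,\dots,\,e_1\otimes v_f,\,e_2\otimes w_f)$; this puts the Gram matrix into the block-diagonal form with $f$ copies of $\begin{pmatrix} 0 & 1 \\ 1 & 0 \end{pmatrix}$ down the diagonal, which is exactly the Gram matrix of $\mathbb{H}^{\oplus f}$. There is no real obstacle here beyond bookkeeping; the content of the lemma is the observation that without unimodularity of $T$ the dual basis need not lie in $T$, and the argument breaks over $\Z_2$ (only over $\Q_2$).
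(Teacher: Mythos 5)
Your proof is correct, and it takes a genuinely different route from the paper. The paper's argument invokes the structure theory of unimodular $\Z_{2}$-lattices: every such form decomposes as an orthogonal sum of copies of $\langle u \rangle$ (with $u$ a unit), the anisotropic even plane $2x^2+2xy+2y^2$, and $\mathbb{H}$ itself, and the lemma is then verified by a separate direct computation for each of these three building blocks, using the distributivity of $\otimes$ over $\oplus$. You instead give a single uniform change-of-basis argument: writing the Gram matrix of $\mathbb{H}\otimes T$ as $\left(\begin{smallmatrix} 0 & M \\ M & 0\end{smallmatrix}\right)$ and conjugating by $\left(\begin{smallmatrix} I & 0 \\ 0 & M^{-1}\end{smallmatrix}\right)$, which is integral precisely because $\det(M)\in\Z_{2}^{*}$; the computation $P^{T}GP=\left(\begin{smallmatrix} 0 & I \\ I & 0\end{smallmatrix}\right)$ and the final reordering into $2\times 2$ blocks are exactly as you describe, and each block is the Gram matrix of $2xy=\mathbb{H}$. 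What your approach buys is independence from the classification of unimodular $\Z_{2}$-forms (a standard but nontrivial input) and complete generality --- the same argument shows $\mathbb{H}\otimes T\cong \mathbb{H}^{\oplus f}$ for a unimodular symmetric bilinear form over any commutative ring; what the paper's approach buys is brevity on the page, at the cost of outsourcing both the decomposition theorem and three case checks to ``a calculation.'' Your closing observation correctly isolates where unimodularity is used: without it the dual basis $w_{j}=\sum_i (M^{-1})_{ij}v_i$ leaves the lattice and the isometry only exists over $\Q_{2}$.
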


\begin{proof}
Let $u$ be a unit in $\Z_{2}$. A calculation shows that the result is valid for $T \cong \langle u \rangle$, $T\cong 2x^2+2xy+2y^2$ and $T \cong \mathbb{H}$. Since every unimodular form over $\Z_{2}$ is sum of of these type of forms we are done.
\end{proof}

\subsubsection{Localizing the integral trace form}

Here we show how the Jordan decomposition of the localization of the trace form is obtained from the above results on the integral trace of a local field.\\

Let $L$ be a degree $n$ number field and let $p\neq 2$ be a prime at worst tamely ramified in $L$.  Let $L_{1},..., L_{g}$ be the completions of $L$ at the primes above $p$  i.e., the $p$-adic local fields defined by $L \otimes_{\Q} \Q_{p} \cong L_{1} \times...\times L_{g}$. Let $\nu_{i}:=\nu_{L_{i}}$ where $\nu_{L_{i}}$ is the element in $\Z_{p}^{*}/(\Z_{p}^{*})^{2}$ in Theorem \ref{localgeneral}.

\begin{definition}\label{thirdram} The third ramification factor of $p$ at $L$ is the element $\nu_{p}^{L} \in \Z_{p}^{*}/(\Z_{p}^{*})^{2}$ defined by
\[\nu_{p}^{L}:= \left(\prod_{i=1}^{g}  \nu_{i}^{(e_{i}-1)}\right).\]
\end{definition}

\begin{remark}
A priori $\nu_{p}^{L}$ seems to depend on the choice of uniformizers of each $L_{i}$, see Lemma \ref{fact1},  but as a consequence of the theorem below he have that ${\rm disc}(L)=p^{n-f_{p}^{L}}\alpha_{p}^{L}\beta_{p}^{L}\nu_{p}^{L} \pmod{ (\Z_{p}^{*})^{2}}$, so $\nu_{p}^{L}$ is well defined.
\end{remark}

\subsubsection{Proof of Theorem \ref{general}}

We recall the statement of the theorem. \\

{\it Let $L$ be a degree $n$ number field.  Let $p$ be a rational prime which is not wildly ramified in $L$. Then,
\[q_{L} \otimes \Z_{p} \cong 
\begin{cases}
\mathfrak{a}_{p}^{L}  \bigoplus p \otimes \underbrace{( \mathbb{H} \oplus...\oplus \mathbb{H})}_{\frac{n-f_{p}^{L}}{2}} & \mbox{if $p=2$,}\\
\mathfrak{a}_{p}^{L}  \bigoplus p \otimes \underbrace{\langle 1,...,1,\beta_{p}^{L}\nu_{p}^{L} \rangle}_{n-f_{p}^{L}}  & \mbox{if $p \neq 2$}. \\
\end{cases}\]
Furthermore, if $p \neq 2$ we have that $\mathfrak{a}_{p}^{L} \cong \underbrace{\langle1,....,1,\alpha_{p}^{L} \rangle}_{f_{p}^{L}}.$}
\begin{proof}
This follows from Theorem \ref{localgeneral} since $q_{L} \otimes \Z_{p} \cong \mathrm{tr}_{L_{1}/\Q_{p}}(x^{2})\mid_{O_{L_{1}}} \oplus ... \oplus \mathrm{tr}_{L_{g}/\Q_{p}}(x^{2})\mid_{O_{L_{g}}}$ where $L \otimes_{\Q} \Q_{p} \cong L_{1} \times...\times L_{g}$. The second assertion follows since  $\mathfrak{a}_{p}^{L}$ is unimodular over $\Z_{p}$.
\end{proof}

An interesting consequence of the above theorem 

\begin{theorem}\label{LaTrazaAca} Let $K, L$ be two number fields with the same degree. Let $p$ be an odd prime and suppose that the discriminant of $K$ is equal to that of  $L$ up to squares in $(\Z_{p})^{*}$. Moreover, suppose that $p$ is not wildly ramified in either $K$ or $L$. Then, the integral trace forms of $K$ and $L$ are isometric over $\Z_{p}$ if and only if \[ \left( \frac{\alpha_{p}^{K}}{p} \right)= \left( \frac{\alpha_{p}^{L}}{p} \right).\]

\end{theorem}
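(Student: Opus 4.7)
The plan is to invoke Theorem \ref{general} to obtain the Jordan decompositions of $q_K\otimes\Z_p$ and $q_L\otimes\Z_p$ over $\Z_p$, and then to use the classical fact that, for $p$ odd, a $\Z_p$-lattice is determined up to isometry by its Jordan decomposition, with each unimodular block classified by its rank and the square class of its determinant. The discriminant hypothesis will be exactly what is needed to collapse the natural two-block isometry criterion into a single condition on $\alpha_p$.

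First I would unpack the discriminant hypothesis. By the remark following Definition \ref{thirdram},
\[ {\rm disc}(M) \equiv p^{\,n-f_p^M}\,\alpha_p^M\,\beta_p^M\,\nu_p^M \pmod{(\Z_p^*)^2}, \qquad M\in\{K,L\}. \]
Matching $p$-adic valuations of the two discriminants forces $f_p^K=f_p^L=:f$, and matching the unit parts gives
\[ \alpha_p^K\,\beta_p^K\,\nu_p^K \;\equiv\; \alpha_p^L\,\beta_p^L\,\nu_p^L \pmod{(\Z_p^*)^2}. \qquad(\star) \]
Since $p$ is not wildly ramified, Proposition \ref{ElLemaSplit}(c) yields $\alpha_p^K,\alpha_p^L\in\Z_p^*$, and then $(\star)$ combined with the fact that the unit part of ${\rm disc}(M)$ lies in $\Z_p^*$ implies $\beta_p^K\nu_p^K,\beta_p^L\nu_p^L\in\Z_p^*$ as well.

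Next, Theorem \ref{general} (case $p\ne 2$, together with the final ``furthermore'') gives, for $M\in\{K,L\}$,
\[ q_M\otimes\Z_p \;\cong\; \underbrace{\langle 1,\ldots,1,\alpha_p^M\rangle}_{f} \;\oplus\; \langle p\rangle\otimes\underbrace{\langle 1,\ldots,1,\beta_p^M\nu_p^M\rangle}_{n-f}. \]
Both forms thus share the same Jordan shape: a unimodular block of rank $f$ plus a $p$-modular block of rank $n-f$. By uniqueness of the Jordan decomposition over $\Z_p$ for odd $p$ (cf.\ Cassels, \emph{Rational Quadratic Forms}, Ch.~VIII), isometry of the two lattices is equivalent to the simultaneous isometry of the two blocks; and since a unit-diagonal $\Z_p$-form is determined by its rank and the square class of its determinant, this amounts to
\[ (\mathrm{i})\ \alpha_p^K\equiv\alpha_p^L, \qquad (\mathrm{ii})\ \beta_p^K\nu_p^K\equiv\beta_p^L\nu_p^L \pmod{(\Z_p^*)^2}. \]
But by $(\star)$ the conditions $(\mathrm{i})$ and $(\mathrm{ii})$ are equivalent, so the isometry of $q_K\otimes\Z_p$ and $q_L\otimes\Z_p$ is governed by $(\mathrm{i})$ alone. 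For $p$ odd and $\alpha_p^K,\alpha_p^L\in\Z_p^*$, condition $(\mathrm{i})$ is exactly the Legendre symbol equality $\left(\tfrac{\alpha_p^K}{p}\right)=\left(\tfrac{\alpha_p^L}{p}\right)$ in the statement.

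The main point, rather than a genuine obstacle, is to recognize that the discriminant hypothesis is precisely what couples the unimodular and $p$-modular Jordan blocks so that isometry is detected by the single parameter $\alpha_p$. The only mild care needed is verifying that $\beta_p^M\nu_p^M$ is a $p$-adic unit, so that the $p$-modular block is diagonalizable with unit entries; as noted, this follows from the non-wild-ramification hypothesis together with $(\star)$.
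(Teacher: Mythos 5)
Your proof is correct and follows essentially the same route as the paper: the published proof is a one-line appeal to Theorem \ref{general} together with an external lemma from \cite{Manti1} (note that the paper's proof mis-cites Theorem \ref{LaTrazaAca} itself where Theorem \ref{general} is clearly intended), and your argument simply makes explicit the Jordan-block comparison over $\Z_{p}$ that this citation encapsulates. The way you use the discriminant hypothesis to couple the unimodular and $p$-modular blocks is exactly the intended mechanism, so there is nothing to add.
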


\begin{proof}

The result follows from Theorem \ref{LaTrazaAca} and \cite[Lemma 2.1]{Manti1}.

\end{proof}

\subsubsection{Interesting well known consequences}\label{oldies}

The following standard results can all be obtained as consequences of Theorem \ref{general}. 
\\

\begin{corollary}\ \\
\begin{itemize}

\item[(a)]  For every number field $L$ we have that  $\alpha_{-1}^{L}=\beta_{-1}^{L}=2^{s_{L}}$ and $f_{-1}^{L}=r_{L}+s_{L}$. Hence, in the case $p=-1$ the above formula is the well know result of O.Tauski \cite{Ta}:
\[ q_{L} \otimes \R  \cong  \langle 1,..., 1,2^{s_{L}} \rangle \oplus  (-1) \otimes \langle 1,...,1,2^{s_{L}} \rangle \cong  \underbrace{\langle 1,..., 1 \rangle}_{r_{L}+s_{L}} \oplus  \underbrace{\langle -1,...,-1 \rangle}_{s_{L}}.\]

\item[(b)] Since $\mathfrak{a}_{L}$ is unimodular, and $ \beta_{p}^{L}\nu_{p}^{L} \in \Z_{p}^{*} $, for all non wildly ramified prime $p$ in $L$ the well know formula \[v_{p}({\rm disc}(L))=n-f_{p}^{L}\]  is an immediate consequence of Theorem \ref{general}.\\

\item[(c)] If $p$ is a finite prime that is unramified in $L$ then Theorem \ref{general} implies the classic formula of Hasse,  \[\left( \frac{{\rm disc}(L)}{p}\right)=(-1)^{n-g_{L}^{p}}.\] \\
\end{itemize}
\end{corollary}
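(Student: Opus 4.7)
For part (a), the plan is to compute the three quantities directly from Definition \ref{RamInvar} using the standard convention that, at the archimedean place $p=-1$, each real completion contributes $e_i=f_i=1$ while each complex completion contributes $e_i=2$, $f_i=1$, so $g_{-1}^L=r_L+s_L$. Then $f_{-1}^L=\sum f_i=r_L+s_L$ is immediate, and $\prod e_i^{f_i}=1^{r_L}2^{s_L}=2^{s_L}$, with $f_{-1}^L-g_{-1}^L=0$, giving $\alpha_{-1}^L=2^{s_L}$. For $\beta_{-1}^L$ the sign exponent $\sum\lfloor(e_i-1)/2\rfloor f_i$ vanishes (both $\lfloor 0/2\rfloor$ and $\lfloor 1/2\rfloor$ are $0$), the product $\prod e_i^{f_i(e_i-1)}$ again reduces to $2^{s_L}$, and $n-f_{-1}^L-e_{-1}^L+g_{-1}^L=n-(r_L+s_L)-n+(r_L+s_L)=0$, so $\beta_{-1}^L=2^{s_L}$. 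Plugging these into Theorem \ref{general} (interpreted at $p=-1$ with $u_{-1}=-1$), the first summand $\mathfrak{a}_{-1}^L\cong\langle 1,\ldots,1,2^{s_L}\rangle$ of rank $r_L+s_L$ is positive definite since $2^{s_L}>0$, and the second summand $(-1)\otimes\langle 1,\ldots,1,\beta_{-1}^L\nu_{-1}^L\rangle$ of rank $s_L$ is negative definite, yielding Taussky's formula.

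For part (b), the plan is to read the $p$-adic valuation of the determinant off the Jordan decomposition in Theorem \ref{general}. The key observation is that $\mathfrak{a}_p^L$ is unimodular over $\Z_p$: in the odd-$p$ case it is isometric to $\langle 1,\ldots,1,\alpha_p^L\rangle$ where $\alpha_p^L\in\Z_p^*$ precisely because $p$ is not wildly ramified (Proposition \ref{ElLemaSplit}(c)), and in the case $p=2$ the diagonal entries of $\mathfrak{a}_2^L$ are, up to sign, products of ramification indices $e_i$ which are all odd since $2$ is not wildly ramified, hence units in $\Z_2^*$. Since $\beta_p^L\nu_p^L\in\Z_p^*$ as well, the second Jordan block contributes a factor of $p^{n-f_p^L}$ to the determinant, and we obtain $v_p(\mathrm{disc}(L))=v_p(\det(q_L\otimes\Z_p))=n-f_p^L$.

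For part (c), the plan is to specialize the previous step to the unramified case. When $p$ is unramified and finite, every $e_i$ equals $1$, so $f_p^L=n$ and the $p$-scaled Jordan block in Theorem \ref{general} is empty. Hence $q_L\otimes\Z_p\cong\mathfrak{a}_p^L\cong\langle 1,\ldots,1,\alpha_p^L\rangle$, with $\alpha_p^L=u_p^{n-g_p^L}$ since $\prod e_i^{f_i}=1$ and $f_p^L-g_p^L=n-g_p^L$. Taking determinants and reducing modulo squares, $\mathrm{disc}(L)\equiv u_p^{n-g_p^L}\pmod{(\Z_p^*)^2}$, so by the defining property $\left(\tfrac{u_p}{p}\right)=-1$ we conclude $\left(\tfrac{\mathrm{disc}(L)}{p}\right)=(-1)^{n-g_p^L}$.

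None of the three parts poses a genuine obstacle; the only place one has to be mildly careful is the bookkeeping of archimedean ramification data in part (a) (reconciling $u_{-1}=-1$ with the degenerate exponents in the definition of $\beta_{-1}^L$), and verifying unimodularity of $\mathfrak{a}_2^L$ in part (b). Both are routine once the hypothesis of no wild ramification is used to force the relevant $e_i$ to be $p$-adic units.
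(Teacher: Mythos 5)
Your proposal is correct and follows exactly the route the paper intends: the paper gives no explicit proof beyond noting that all three parts are read off from Theorem \ref{general}, and your computations (the archimedean convention $e_i=2$, $f_i=1$ for complex places giving $\alpha_{-1}^{L}=\beta_{-1}^{L}=2^{s_{L}}$ and $f_{-1}^{L}=r_{L}+s_{L}$; unimodularity of $\mathfrak{a}_{p}^{L}$ plus $\beta_{p}^{L}\nu_{p}^{L}\in\Z_{p}^{*}$ for the valuation of the discriminant; and $\alpha_{p}^{L}=u_{p}^{n-g_{p}^{L}}$ in the unramified case) are precisely the bookkeeping the author leaves implicit. No gaps.
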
  
  

\noindent
{\footnotesize Guillermo Mantilla-Soler, Department of Mathematics, Universidad Konrad Lorenz,\\
Bogot\'a, Colombia ({\tt gmantelia@gmail.com})}

\end{document}